\documentclass[12pt]{article}
\usepackage{t1enc,amssymb,amsbsy,amsthm,amsmath,amsfonts}
\usepackage{graphicx}
\usepackage{color}

\newcommand{\e}{\rm e}

%si\,\mathop{\sim}_{x\to a}\,

\newtheorem{theorem}{Theorem}[section]
\newtheorem{lemma}[theorem]{Lemma}
\newtheorem{proposition}[theorem]{Proposition}
\newtheorem{corollary}[theorem]{Corollary}
\newtheorem{example}[theorem]{Example}

\newtheorem{remark}[theorem]{Remark}

\numberwithin{equation}{section}

\def\th{\hbox {\rm tanh\,}}

\def\ln{\hbox {\rm ln\,}}

\def\e{\hbox {\rm e}}

\def\P{{\bf P}}
\def\R{{\bf R}}
\def\R{{\bf R}}

\def\E{{\bf E}}
\def\ep{{\epsilon}}

\def\cF{{\cal F}}

\def\cU{{\cal U}}

\def\cA{{\cal A}}

\def\l{\left}
\def\r{\right}

\def\al{\alpha}
\def\ep{\varepsilon}

\def\de{\delta}
\def\th{\theta}

\def\rp{\right)}
\def\lp{\left(}

\begin{document}

%\author{Patrick Fitzsimmons\\
%\and
\author{Paavo Salminen\\{\small AAbo Akademi University}
\\{\small Faculty of Science and Engineering}
\\{\small FIN-20500 AAbo, Finland} \\{\small email: phsalmin@abo.fi}
\and
Pierre Vallois
\\{\small Universit\'e de Lorraine}
\\{\small CNRS, INRIA, IECL}
\\{\small F-54000 Nancy, France}
\\{\small email: pierre.vallois@univ-lorraine.fr}
%{\small email: }
}

\title{Drawdowns of diffusions}%The Taylor-Lehoczky theorem}%{Piecewise constant processes  associated with the drawdown of a diffusion}

%\date{27.3.2023}

\maketitle

\begin{abstract}
In this paper we give excursion theoretical  proofs of Lehoczky's formula (in an extended form allowing a lower bound for the underlying diffusion)  for the joint distribution of the first drawdown time and the maximum before this time, and of Malyutin's formula for the joint distribution of the first hitting time and the maximum drawdown before this time. It is remarkable -- but there is a clean explanation -- that the excursion theoretical approach which we  developed first for Lehoczky's formula provides also a proof for Malyutin's formula.  Moreover, we discuss some generalizations and analyze the pure jump process  describing  the maximum before the first drawdown time when the  size of the drawdown is varying.  %\textcolor{red}{In fact, we prove an extended form of  the formula valid for killed diffusions and diffusions with a lower bound.}
\end{abstract}

\medskip
\noindent
{\bf{Keywords:}} {Brownian motion, Poisson point process, additive functional, local time, pure jump process.}

\medskip
\noindent
{\bf{MSC Classification:}} {60J60, 60J65, 60G40.}

\newpage

\section{Introduction}
\label{sec0}

To calculate probability distributions of  functionals of stochastic processes is an essential/central/important issue/topic per se but also, of course,  many applications of stochastic processes and stochastic models call for explicit formulas for such distributions. This theme is developed also in this paper. We focus here on one-dimensional diffusions and functionals  associated with the running maximum and the position of the underlying diffusion. In particular, we analyze various aspects and distributions  of drawdowns from the running maximum.  To be more precise, let  $X=(X_t)_{t\geq 0}$ denote a one-dimensional diffusion defined on an interval. Then  the running maximum process and the drawdown process are defined via
$$
M_t:=\max_{0\leq s\leq t} X_s\ \ {\rm{and}}\ \ DD_t:=M_t-X_t,
$$
respectively. We introduce also  the stopping time when the process first drops below the maximum by $\delta>0$ units as
\begin{equation}\label{dtheta}
   \theta_\delta:=\inf\{t\geq 0\, :\, M_t-X_t>\delta\};
  %  \theta_\delta:=\inf\{t\geq 0\, :\, M_t-X_t=\delta\}.
\end{equation}
also called as the first drawdown time of size $\de$. Another object of our interest is the maximum drawdown up to time $t>0$  given by
\begin{equation}\label{MXD}
D^-_t:=\max_{0\leq s\leq t}(M_s-X_s).
\end{equation}
%also called the maximum decrease up to time $t$.

Lehoczky derived in \cite{lehoczky77} an expression for the joint Laplace transform of the variables of   $  \theta_\delta$ and $M_{\theta_\delta}$ when $X$ is generated via an It\^o SDE. The distribution for Brownian motion with drift was characterized earlier by Taylor  in \cite{taylor75}. Taylor's proof is  based on particular properties of Brownian motion; especially, on the spatial homogeneity. Lehoczky's approach uses a passage to the limit in a discrete setting. Fitzsimmons in \cite{fitzsimmons22} addresses Lehoczky's formula  via excursion theory. The first main theme of our paper is to present a proof of Lehoczky's formula via excursion theory. Our proof uses slightly different ``angle of attack'' than Fitzsimmons' approach in  \cite{fitzsimmons22} . Some misprints in \cite{fitzsimmons22} are also corrected.

 Another main issue in the paper is to study the maximum drawdown up to a hitting time, that is, the random variable $D^-_{H_\eta}$ where   $H_\eta$ stands for the first hitting time of the state $\eta$, i.e.,
\begin{equation}\label{Hit}
   H_\eta:=\inf\{t\geq 0\, :\,X_t=\eta\}.
\end{equation}
We determine  the joint distribution of $H_\eta$ and $D^-_{H_\eta}$.
 The distribution of  $D^-_{H_\eta}$ for Brownian motion with drift was  calculated by Brockwell in \cite{brockwell74} via a passage to the limit in a discrete setting. Malyutin in \cite{malyutin87}   treated more general processes satisfying  regularity conditions on some key distributions of the underlying process.  He then deduced  the joint distribution of $H_\eta$ and $D^-_{H_\eta}$ when passing to the limit in a discrete setting. We demonstrate that the  joint distribution can be otained  via the excursion theoretical approach developed here to prove Lehoczky's formula. For this  the observation connecting    the distributions of $D^-_{H_\eta}$ and  $M_{\theta_\delta}$ (see (\ref{DM1})) is crucial. It is seen that the approach yields a very appealing formula characterizing the distribution where also the function appearing in Lehoczky's formula is present.

In addition to the papers refered above there are many others serving both theoretical and applied interests. For alternative proofs for Taylor's theorem we refer to Williams \cite{williams76} where the theory of local time is used, and Salminen and Yor \cite{salminenyor03b} where an approach based on the Kennedy martingale, cf. \cite{kennedy76}, is applied. See also Meilijson  \cite{meilijson03} and Hu, Zhi and Yor \cite{hushiyor15}. Drawdowns and related objects for L\' evy processes have also been much studied. We refer to Mijatovic and Pistorius \cite{mijatovicpistorius12} and a more recent contribution Mayerhofer \cite{mayerhofer2019}; also for further  references.

The application indicated in Taylor's and Lehoczky's papers concerns a selling strategy on a stock market where the asset should be sold when the price has fallen below the previous maximum by a given fixed amount. Much of the more recent litterature on drawdowns (and drawups) finds its applications in financial mathematics; in particular, analyzing the properties of the options based on drawdowns. We refer, e.g.,  to   \cite{pospisilvecerhadji09}, \cite{CNP12}, \cite{zhangh15}, \cite{gapeevrodosthenous16}, \cite{dassioslim19}, \cite{zhangli23} together with further references in these papers. For the connection with the change point detection problem, see  \cite{pospisilvecerhadji09}.

Concerning the maximum  drawdown before a hitting time the potential applications discussed in Brockwell's and Malyutin's papers are in queuing theory and in mathematical biology when modeling  cell growth. Later also this functional is much studied in the framework of financial mathematics, see, e.g.,   \cite{douady00}, \cite{magdonismail04}, \cite{salminenvallois07},   \cite{CNP12} and the references therein.

In addition to the topics listed above we discuss the process   $(M_{\theta_\delta})_{\de\geq 0}$.  It is seen that this process is Markov and many characteristics of it are presented. In    \cite{salminenvallois20}  the process  $(D^-_{H_\eta})_{\eta\geq 0}$ is similarly analyzed. As indicated above,  the one-dimensional marginals of these processes are closely related. Hence, a natural task is to compare the probabilistic structures of the processes more closely. In particular, we deduce  the Markov generator of $(D^-_{H_\eta})_{\eta\geq 0}$  from the generator of   $(M_{\theta_\delta})_{\de\geq 0}$.

The paper is organized as follows. In the next section some preliminary facts, needed later in the paper, on linear diffusions are presented.
Two classes of diffusions are introduced for which the results are derived. In particular, geometric Brownian motion when the parameters are such that the process does not tend to 0 is included. Taylor's and Lehoczky's formulas are given in Section 3.  Lehoczky's formula in  \cite{lehoczky77} does not, in fact,  cover the case when the diffusion is defined, e.g., on $(0,\infty)$. In Theorem \ref{prop_L} we have extended the formula accordingly.  The connection with  $M_{\theta_\delta}$ and $D^-_{H_\eta}$ is shown in
Section 4 where also the joint distribution of  ${H_\eta}$ and  $D^-_{H_\eta}$ is characterized in the extended form as explained above.  In Section 5 Lehoczky's and Malyutin's formulae (in the extended form) are derived using the excursion theory of one-dimensional diffusions. The process   $(M_{\theta_\delta})_{\de\geq 0}$ is analyzed in Section 6. %and the paper is concluded with an appendix on some general results on non-decreasing pure jump processes having finitely many jumps on compact (finite) subintervals of $(0,\infty).$

%Lehoczky's formula for diffusions  \cite{hushiyor15} (not the joint) 4-dim squared Bessel, \cite{CNP12} for $M_{\theta_\delta}$, \cite{zhangh15} drawdown time before drawup time  and occupation time formulas (uses L's formula for the maximum Lemma 4.4 !),
 %\cite{pospisilvecerhadji09}  distribution of $M_{\theta_\delta}$ taken from L's paper but not the joint; they do the joint  $M_{\theta_\delta}$ and  $m_{\theta_\delta}$ , \cite{zhangli23} L's formula for GBM ?, \cite{meilijson03} for BM (Taylor's thm)
%\cite{zhangli23} joint law max and min at drawdown time
%The maximum  drawdown:  \cite{douady00} for BM ,  \cite{magdonismail04} BM with drift up to fix time, \cite{salminenvallois07} joint max decrease and increase for BM,  \cite{salminenvallois20},  \cite{CNP12}, \cite{dassioslim19} for BM drawdown duration,
%L\'evy processes \cite{mijatovicpistorius12}
%Applications in financial mathematics and insurance:  \cite{CNP12},  \cite{dassioslim19},  \cite{salminenvallois07}  References therein.

\section{Preliminaries}
\label{sec1}
 Let $X=(X_t)_{t\geq 0}$ be a regular one-dimensional diffusion in the sense of  It\^o and McKean \cite{itomckean74}, see also \cite{borodinsalminen15}.  In particular, $X$ is a strong Markov process with continuous sample paths taking values on an interval  $I\subseteq \R$.
We let  $\P_x $ and $\E_x $ denote the probability measure and
the expectation operator associated with  $X$ when $X_0=x$. The standard notation $\cF_t$ is used for the
$\sigma$-algebra generated by $X$ up
to time $t\geq 0,$ and we set $\cF:=\cF_\infty.$ It is also assumed that $X$ is not killed inside $I$.

Then it is known from the theory of one-dimensional diffusions that
for $\alpha >0$
\begin{equation}
\label{for1}
\E_x(\exp(-\alpha\,H_y))=
\begin{cases}
{\displaystyle{\frac{\psi_\alpha(x)}{\psi_\alpha(y)}}},& x\leq y,\\
{\displaystyle{\frac{\varphi_\alpha(x)}{\varphi_\alpha(y)}}},& x\geq y,\\
\end{cases}
\end{equation}
where   $H_y$ is the first hitting time of $y$ (see (\ref{Hit})) and  $\varphi_\alpha$  ($\psi_\alpha$) are a decreasing (increasing) solution of the generalized differential equation
\begin{equation}
\label{for2}
\frac{d}{dm}\frac{d}{dS} u =\alpha u,
\end{equation}
associated with $X$. The notation $m$ and $S$ are used for the speed measure and the scale function, respectively, of $X.$
 %Therefore, the life time of $X$ is the first exit time from $(l,r)$, that is,
%$$
%\zeta:=\inf\{t\,:\, X_t\not=(l,r)\}.
%$$
Imposing appropriate boundary conditions (see, e.g.,   \cite{borodinsalminen15})
determine $\psi_\alpha$ and $\varphi_\alpha$ uniquely up to  multiplicative constants.  The Wronskian $w_{\alpha}$ is defined as
\begin{align}
\label{wronskian}
\nonumber
w_{\alpha}&:=\psi^{+}_{\alpha}(x)\varphi_\alpha(x)-\psi_\alpha(x)\varphi^{+}_{\alpha}(x)\\
&\hskip2mm =\psi^{-}_{\alpha}(x)\varphi_\alpha(x)-\psi_\alpha(x)\varphi^{-}_{\alpha}(x),
\end{align}
where the superscripts $^+$ and $^-$ denote the right and left
derivatives with respect to the scale function. Notice that $w_\alpha$ does not depend on $x$.
It is well-known (see \cite{itomckean74} p. 150) that
\begin{equation}\label{greenkernel}
g_\alpha(x,y):=
\left\{
\begin{array}{rl}
w^{-1}_\alpha\psi_\alpha(x)\varphi_\alpha(y),\quad x\leq y,\\
w^{-1}_\alpha\psi_\alpha(y)\varphi_\alpha(x),\quad x\geq y,
\end{array}\right.
\end{equation}
serves as a resolvent density (also called the Green function) of $X,$ i.e., for any Borel subset $A$ of $I$
\begin{equation}
\label{RES1}
G_\alpha(x,A):=\E_x\left(\int_0^\zeta {\rm e}^{-\alpha t}{\bf 1}_A(X_t)\, dt\right) = \int_A g_\alpha(x,y)\,m(dy),
\end{equation}
where $\zeta$ denotes the lifetime of the diffusion. %and $G_\alpha$ is called the resolvent kernel associated with $X$. 
 
For simplicity, it is assumed throughout the paper  that the scale function $S$ is continuously differentiable and the speed measure $m$ has a continuous derivative with respect to the Lebesgue measure.  Under these assumptions,   $\varphi_\alpha$  and $\psi_\alpha$ are continuously differentiable. However, at many places in this paper,  it is practical to consider derivatives with respect to the scale function. Hence, we keep the notations for the derivatives introduced above; e.g.,
$$
\psi^{+}_{\alpha}(x)=\psi^{-}_{\alpha}(x)=\frac{d\psi_\alpha(x)}{dx}\frac{dx}{dS(x)}
= \frac{1}{S'(x)}\,\psi^{'}_{\alpha}(x).
$$

Let $l$ and $r$ denote the left hand and the right hand, respectively, end point of $I$. To fix idea, we let, throughout the whole paper, $l\geq -\infty$ and $r=+\infty$. Moreover, we concentrate ourselves to two specific classes of underlying diffusions:
\medskip

\noindent
Class 1.   $X$ is recurrent with a) $S(+\infty)=+\infty, S(l)=-\infty$ or b)
 $S(+\infty)=+\infty$, $S(l)>-\infty$, $l>-\infty$ and reflecting.
\medskip

\noindent
Class 2.    $X$ is transient with a) $S(+\infty)<\infty$, $S(l)=-\infty$, or b) $S(+\infty)<\infty$,  $S(l)>-\infty$, $l>-\infty$ and reflecting.
%\begin{enumerate}
%\item  $X$ is recurrent with $S(l)=-\infty$,  $S(+\infty)=+\infty$.
%\item $X$ is transient with $S(l)=-\infty$, $S(+\infty)<\infty$.
%\end{enumerate}
\medskip
%here

\noindent
In particular, standard Brownian motion belongs to Class 1a and  Brownian motion with drift $\mu>0$ to Class 2a. Notice, e.g., that  killed BM on $(-\infty,0)$ (killed when hitting 0) is excluded. For a geometric Brownian motion $(X_t)_{t\geq 0}$ with
$$
X_t :=x\exp\lp\lp\mu-\sigma^2/2\rp t+\sigma W_t\rp,
$$
where $(W_t)_{t\geq 0}$ is a standard BM, $x>0$, and $\mu,\sigma\in\R$, it follows  by checking the scale function (cf. \cite{borodinsalminen15} p. 136) that $X$ is in Class 1a  in case $\mu=\sigma^2/2$, and  in Class 2a  if $\mu>\sigma^2/2$.

%To higlight/concretize the notions presented above we give the next example about geometric Brownian motion, cf. \cite{borodinsalminen15} p. 136.

\section{Main theorems}
\label{main}

\subsection{Drawdown time}%Taylor's and Lehoczky's formulas}
\label{sec2}

In this section we recall Lehoczky's formula  characterizing the joint distribution of the first drawdown time ${\theta_\delta}$ and the maximum $M_ {\theta_\delta}$ at this moment.  Assuming $\theta_\delta<\infty$ -- we show that this holds for diffusions in  class 1 -- then  $M_ {\theta_\delta}$ is bigger than the initial value of $X$ but  also that $M_ {\theta_\delta}$ is bigger than $\de+ l$ which is important to remember in case $l>-\infty$. Below (\ref{ei00}) is Lehoczky's formula. Taylor's formula for standard Brownian motion is taken up in Example \ref{BM}  and displayed in (\ref{BM1}). In Theorem \ref{prop_L} Lehoczky's formula is extended to include the possibility that $l>-\infty$. As explained in the introduction, Lehoczky's theorem is proved  in Section \ref{sec4} using excursion theory.  

\begin{theorem}
\label{prop_L}
Let   $X$ be in class 1 or 2.
 %and assume that $l=-\infty$
  Then the joint Laplace transform of $M_{\theta_\delta}$ and $\theta_\delta$ is given for $x\in I$,   $\delta>0$,  $\alpha>0$ and $\beta\geq 0$  by
\begin{equation}\label{ei00}
    \E_x\left(\exp(-\alpha \theta_\delta -\beta M_{\theta_\delta})\right)=
    \frac{\psi_\alpha(x)}{\psi_\alpha(x\vee(\de+l))} \int_{x\vee (\delta +l)}^\infty c_\alpha(y;\delta)\qquad\qquad\qquad
  \end{equation}
  \[\qquad\qquad\qquad \times  \displaystyle \exp\lp- \beta y-\int_{x\vee(\de+l)}^y b_\alpha(z;\de) \,dS(z)\rp dS(y)\]
where $\vee$ is the usual maximum operator, 
\begin{equation}
\label{B}
    b_\alpha(y;\delta):= \frac{\varphi_\alpha(y-\delta)\psi^-_\alpha(y)-\varphi^-_\alpha(y)\psi_\alpha(y-\delta)}
    {\varphi_\alpha(y-\delta)\psi_\alpha(y)-\varphi_\alpha(y)\psi_\alpha(y-\delta)}
\end{equation}
and
\begin{align}
\label{C}
    \nonumber
    c_\alpha(y;\delta)&:= \frac{\varphi_\alpha(y)\psi^-_\alpha(y)-\varphi^-_\alpha(y)\psi_\alpha(y)}
    {\varphi_\alpha(y-\delta)\psi_\alpha(y)-\varphi_\alpha(y)\psi_\alpha(y-\delta)}\\
    &
 =\frac{w_\alpha}
       {\varphi_\alpha(y-\delta)\psi_\alpha(y)-\varphi_\alpha(y)\psi_\alpha(y-\delta)}.
\end{align}
Moreover, for $y\geq x\vee (\de +l)$ % for $x\vee(\de+l)\le y$
\begin{equation}
\label{eq:ei01}
\P_x(M_{\theta_\delta}>y)=\exp\l(-\int_{x\vee(\de+l)}^y\frac{dS(z)}{S(z)-S(z-\delta)}\r).
%\P_x(M_{\theta_\delta}>y)=\exp\l(-\int_{x\vee(\de+l)}^y\frac{dS(z)}{S(z)-S(z-\delta)}\r).
\end{equation}
\end{theorem}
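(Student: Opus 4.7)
My plan is to prove the survival formula (\ref{eq:ei01}) first by a direct hazard-rate argument, and then the joint Laplace transform (\ref{ei00}) via a Feynman--Kac boundary value problem for the augmented state $(X, M)$. For (\ref{eq:ei01}), the case $x < \delta + l$ reduces to $x = \delta + l$ by the strong Markov property at $H_{\delta+l}$: for $t \le H_{\delta+l}$ one has $M_t \le \delta + l$ and $X_t \ge l$, hence $M_t - X_t \le \delta$ and $H_{\delta+l} \le \theta_\delta$. For $x \ge \delta + l$, set $F(y) := \P_x(M_{\theta_\delta} > y)$ and decompose $\{M_{\theta_\delta} > y + dy\} = \{H_{y+dy} < \theta_\delta\}$ through $H_y$; by strong Markov at $H_y$ (finite on $\{M_{\theta_\delta} > y\}$) and the two-point scale formula,
\[
\frac{F(y+dy)}{F(y)} = \P_y(H_{y+dy} < H_{y-\delta}) = \frac{S(y) - S(y-\delta)}{S(y+dy) - S(y-\delta)},
\]
whose infinitesimal logarithmic derivative integrates at once to (\ref{eq:ei01}).

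For (\ref{ei00}), I would enlarge the state space and study
\[
u(x, m) := \E\bigl[\exp(-\alpha\theta_\delta - \beta M_{\theta_\delta}) \mid X_0 = x,\, M_0 = m\bigr]
\]
on the region $l \vee (m-\delta) \le x \le m$. Standard Feynman--Kac gives $\cG_x u = \alpha u$ in the interior with Dirichlet datum $u(m - \delta, m) = e^{-\beta m}$ at the drawdown boundary. Applying It\^o--Tanaka to $e^{-\alpha t} u(X_t, M_t)$ produces a $u_m(X_t, M_t)\,dM_t$ term supported on $\{X_t = M_t\}$, so for the martingale property the Neumann-type reflection condition $\partial_m u(m, m) = 0$ is forced. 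I would then write the general solution in the convenient form
\[
u(x, m) = \tilde C(m)\bigl[\varphi_\alpha(x)\psi_\alpha(m) - \psi_\alpha(x)\varphi_\alpha(m)\bigr] + D(m)\psi_\alpha(x),
\]
designed so the bracket vanishes at $x = m$. The Dirichlet condition becomes algebraic in $\tilde C, D$, and the Neumann condition reduces via the Wronskian identity to $\tilde C(m)\, w_\alpha + D_S(m)\,\psi_\alpha(m) = 0$. Eliminating $\tilde C(m)$ and setting $V(m) := u(m,m) = D(m)\,\psi_\alpha(m)$, a direct Wronskian manipulation identifies the coefficients of the resulting first-order linear ODE precisely as $b_\alpha(m;\delta)$ and $c_\alpha(m;\delta)$ of (\ref{B})--(\ref{C}):
\[
V_S(m) = b_\alpha(m;\delta)\,V(m) - c_\alpha(m;\delta)\,e^{-\beta m}.
\]
Integration with integrating factor $\exp(-\int b_\alpha\,dS)$ together with the boundary condition $V(m) \to 0$ as $m \to \infty$ yields the integral in (\ref{ei00}) for $x \ge \delta + l$. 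For $x < \delta + l$, the strong Markov property at $H_{\delta+l}$ contributes the factor $\E_x[e^{-\alpha H_{\delta+l}}] = \psi_\alpha(x)/\psi_\alpha(\delta+l)$, which merges into the prefactor $\psi_\alpha(x)/\psi_\alpha(x \vee (\delta+l))$ of the statement.

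The main obstacles I expect are: (i) justifying the reflection boundary condition rigorously, since the local-time measure $dM_t$ concentrated on $\{X = M\}$ must be handled via an It\^o--Tanaka argument requiring enough regularity of $u$ across the diagonal; (ii) the explicit Wronskian manipulation that turns the raw ODE coefficients into exactly $b_\alpha$ and $c_\alpha$ as written (routine but where the careful distinction between ordinary and scale derivatives matters, and where the Wronskian identity $\varphi_\alpha \psi^+_\alpha - \psi_\alpha \varphi^+_\alpha = w_\alpha$ is the key cancellation); and (iii) verifying the decay $V(m) \to 0$ at infinity under the class 1 and 2 hypotheses, which is immediate for $\beta > 0$ from $V(m) \le e^{-\beta m}$ but for $\beta = 0$ requires using $\alpha > 0$ together with the finiteness of $\theta_\delta$ to control $V$.
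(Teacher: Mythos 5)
Your argument takes a genuinely different route from the paper's. The paper works entirely inside excursion theory: the master formula (\ref{master}) for the Poisson point process of excursions below the running maximum, the identification of $b_\alpha$ and $c_\alpha$ as excursion-law quantities (Lemma \ref{lemma34}), the pathwise decomposition of ${\bf 1}_{\{M_{\theta_\delta}<z\}}$ over excursions (Lemma \ref{lT1}), and a telescoping product identity that turns $\lambda(y;x)=\E_x\big(\e^{-\alpha H_y}{\bf 1}_{\{y\le M_{\theta_\delta}\}}\big)$ into an integral equation (Lemma \ref{lT2}). You replace all of this by (a) a strong-Markov hazard-rate computation for the law of $M_{\theta_\delta}$ and (b) a Feynman--Kac free-boundary problem for $u(x,m)$ on $\{l\vee(m-\delta)\le x\le m\}$ with Dirichlet datum at $x=m-\delta$ and the Neumann condition $\partial_m u(m,m)=0$ coming from the $u_m\,dM_t$ term. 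I checked your Wronskian elimination: with your ansatz the equation for $V(m)=u(m,m)$ does come out as $V_S=b_\alpha(m;\delta)V-c_\alpha(m;\delta)\e^{-\beta m}$ with exactly the coefficients (\ref{B})--(\ref{C}), and your reduction of the case $x<\delta+l$ via $H_{\delta+l}\le\theta_\delta$ is the same device the paper uses. What the excursion proof buys is a treatment of $\alpha=0$ that does not require $\psi_0,\varphi_0$ (which do not exist in class 1, cf.\ Remark \ref{rmk0}), and Theorem \ref{SaVa0} drops out of the same computation via $\lambda$; what your proof buys is independence from excursion theory and proximity to the PDE treatment of drawdowns, at the cost of a regularity/verification argument for the It\^o--Tanaka step (most cleanly handled by defining $u$ through the explicit formula and verifying the martingale property, rather than deriving the boundary value problem from the probabilistic definition).

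Two points need repair. First, the transversality condition at $+\infty$ is not $V(m)\to 0$: for standard Brownian motion with $\beta=0$ one has $V\equiv 1/\cosh(\delta\sqrt{2\alpha})$, cf.\ (\ref{BM11}). What must vanish when you integrate the ODE from $x$ to $N$ is the product $\exp\big(-\int_x^N b_\alpha\,dS\big)\,V(N)$; since $\exp\big(-\int_x^N b_\alpha\,dS\big)=\E_x\big(\e^{-\alpha H_N};\,N\le M_{\theta_\delta}\big)$ (the function $\lambda(N;x)$ of Lemma \ref{lT2}), this tends to $0$ for $\alpha>0$ by dominated convergence because $H_N\to\infty$ a.s.\ in classes 1 and 2. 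Second, the identity $F(y+\epsilon)/F(y)=\P_y(H_{y+\epsilon}<H_{y-\delta})$ holds only to first order, because the drawdown barrier moves up with the maximum on $[y,y+\epsilon]$; you need the sandwich
\begin{equation*}
\P_y\lp H_{y+\epsilon}<H_{y+\epsilon-\delta}\rp\;\le\;\P_y\lp H_{y+\epsilon}\le\theta_\delta\rp\;\le\;\P_y\lp H_{y+\epsilon}<H_{y-\delta}\rp,
\end{equation*}
whose two sides have the same derivative at $\epsilon=0$, before passing to the logarithmic derivative. Both repairs are routine.
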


\begin{remark}
\label{rmk0}
{\bf 1.}
In (\ref{ei00})  the convention $\exp(-\infty)=0$ is used in case  $\theta_\delta=+\infty$. Clearly, if  $\theta_\delta<+\infty$ then also $M_{\theta_\delta}<+\infty$. 

\noindent
{\bf 2.} For a diffusion in class 2 the formula (\ref{ei00}) is valid also for $\alpha=0$ with $\psi_0\equiv 1$ and $\varphi_0(x)=S(+\infty)-S(x)$.  If the diffusion is  in class 1 approriate functions $\psi_0$ and $\varphi_0$ do not exist. To find the Laplace transform of $M_{\theta_\delta}$ one could try to passage to limit as $\alpha\to 0$. However, in the proof in Section \ref{sec4} we do not exploit this approach but instead work directly with $\alpha=0$.

\noindent
{\bf 3.} From (\ref{ei00}) it immediately follows that for the bounded and measurable function $f$ we have
 \begin{align*}
%\label{ei00}
\nonumber
%&\E_x\left(\exp(-\alpha \theta_\delta) f(M_{\theta_\delta})\right)
&\E_x\left(\e^{-\alpha \theta_\delta}\, f(M_{\theta_\delta})\right)
%\\
%&\hskip2cm
=
\int_{x}^\infty f(y)\exp\l(-\int_{x}^yb_\alpha(z;\delta )dS(z)\r)\, c_\alpha(y;\delta) dS(y).
\end{align*}

%{\bf 4.} Obviously if $l=-\infty$, then $\displaystyle \frac{\psi_\alpha(x)}{\psi_\alpha(x\vee(\de+l))}=1$ and consequently \eqref{ei00} can be simplified.}
\end{remark}

We conclude this subsection with a result  stating  the finiteness of ${\th_\de}$ in Class 1. We demonstrate via examples that  it is possible that $X$ converges to $+\infty$ so fast that  $\theta_\de=+\infty $  with positive probability for all values on $\de$, and that  there are diffusions in Class 2 such that a.s. $\theta_\de<+\infty $ for all values on $\de$. 

\noindent
\begin{proposition}
\label{finiteness1}
Let   $X$ be in Class 1. Then  $\th_\de$ is finite for all $\de>0$ and so is $M_{\th_\de}$.% is finite for all $\de>0$
\end{proposition}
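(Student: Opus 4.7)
The plan is to argue by contradiction, using recurrence to rule out the event $\{\theta_\delta=+\infty\}$. The key observation is that on this event the path $X$ would be forced to drift to $+\infty$, which is incompatible with recurrence. I would carry out three short steps and conclude.

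First, I would establish that $M_t\uparrow +\infty$ a.s. under $\P_x$. Since $X$ belongs to Class 1, it is recurrent and $S(+\infty)=+\infty$, so that the right end point $+\infty$ is a natural (inaccessible) boundary. Combined with recurrence, this yields $\P_x(H_y<\infty)=1$ for every $y\in I$, hence $\sup_{t\geq 0} X_t=+\infty$ a.s. By continuity and monotonicity of the running maximum, $M_t\to +\infty$ almost surely.

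Second, I would note that on the event $\{\theta_\delta=+\infty\}$ the very definition (\ref{dtheta}) forces $M_t-X_t\leq \delta$ for every $t\geq 0$, and therefore $X_t\geq M_t-\delta\to +\infty$. On the other hand, recurrence guarantees that for any fixed point $y_0\in I$ (for instance $y_0=x$) the diffusion $X$ returns arbitrarily close to $y_0$ at arbitrarily large times, so that $\liminf_{t\to\infty}X_t\leq y_0<+\infty$ almost surely. This contradicts $X_t\to +\infty$, whence $\P_x(\theta_\delta=+\infty)=0$. Finally, since $X$ has continuous sample paths and $\theta_\delta<\infty$ a.s., the variable $X_{\theta_\delta}$ is finite, and by continuity $M_{\theta_\delta}=X_{\theta_\delta}+\delta<+\infty$ a.s.

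The only point requiring a little care is treating Classes 1a and 1b on the same footing: in Class 1a the lower boundary $l$ is inaccessible (because $S(l)=-\infty$), while in Class 1b it is reflecting; in either case recurrence delivers returns to any interior point and thus the contradiction in Step 2. I do not anticipate this as a real obstacle, since both situations are covered by the standard recurrence/boundary classification for one-dimensional diffusions recalled in Section~\ref{sec1}. An alternative analytic route would consist in letting $y\to +\infty$ in (\ref{eq:ei01}) and checking that $\int_{x\vee(\delta+l)}^\infty dS(z)/(S(z)-S(z-\delta))=+\infty$ under the Class~1 hypothesis $S(+\infty)=+\infty$, but the probabilistic argument above seems shorter and more transparent.
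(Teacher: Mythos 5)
Your argument is correct, but it is not the route the paper takes. The paper proves finiteness analytically from the explicit law of $M_{\theta_\delta}$: starting from (\ref{eq:ei01}) it bounds the exponent from below, using monotonicity of $S$, by
$\ln\bigl((S(y)-S(x-\delta))/(S(x\vee(\delta+l))-S(x-\delta))\bigr)$, and then lets $y\to\infty$; the Class~1 hypothesis $S(+\infty)=+\infty$ forces $\P_x(M_{\theta_\delta}=+\infty)=0$, hence $\theta_\delta<\infty$. This is exactly the ``alternative analytic route'' you mention in your last sentence, except the authors avoid computing the integral $\int dS(z)/(S(z)-S(z-\delta))$ exactly and instead use the crude bound $S(z)-S(z-\delta)\le S(z)-S(x-\delta)$ for $z\ge x$. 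Your probabilistic argument --- recurrence gives $M_t\uparrow+\infty$ and $\liminf_t X_t\le x$ a.s., while on $\{\theta_\delta=+\infty\}$ one would have $X_t\ge M_t-\delta\to+\infty$, a contradiction --- is sound and has the advantage of being self-contained: the paper's proof leans on Theorem~\ref{prop_L}, whose proof is only given later in Section~\ref{sec4} (there is no circularity, but the logical dependence is real), whereas yours uses nothing beyond the definition of recurrence and the continuity of paths. It also makes transparent why the conclusion can fail in Class~2 (a transient path may escape to $+\infty$ without ever realizing a drawdown of size $\delta$), which the paper illustrates separately in Example~\ref{ex1}. What the analytic route buys in exchange is quantitative information: it identifies the exact tail $\P_x(M_{\theta_\delta}>y)$ and shows precisely which divergence ($S(+\infty)=+\infty$) is responsible for finiteness, a criterion that then discriminates between Examples~\ref{ex1} and~\ref{ex2} in Class~2. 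Your closing remark that $M_{\theta_\delta}=X_{\theta_\delta}+\delta<\infty$ on $\{\theta_\delta<\infty\}$ is fine and matches the paper's (implicit) conclusion.
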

\begin{proof}
%Assume first that $X$ is in class 1.
Consider the distribution of $M_{\th_\de}$ as given in  (\ref{eq:ei01}). For the integral on the right hand side we have since $S$ is non-decreasing
 \begin{align*}
\int_{x\vee(\de+l)}^y\frac{S'(z)dz}{S(z)-S(z-\delta)}&\ge\int_{x\vee(\de+l)}^y\frac{S'(z)dz}{S(z)-S(x-\delta)}\\
&\ge\ln\lp\frac{S(y)-S(x-\de)}{S({x\vee(\de+l)})-S(x-\delta)}\rp.
 \end{align*}
Since, by the assumption, $ \lim_{y\to\infty}S(y)=+\infty$ it follows that
$$
\P_0(M_{\th_\de}=+\infty)=0,
$$
which implies that  $\th_\de$ is finite for all $\de>0$, as claimed.
\end{proof}

%\begin{remark} Notice that $\th_\de$ can be finite for all $\de>0$ also for $X$ in class 2. Indeed, let $X$ be a BM with drift $\mu>0$. Then
%$$
%S(x)=\frac 1{2\mu}\lp 1-\e^{-2\mu x}\rp,
%$$
%and straightforward calculations show that
%$$
%\P_0({\th_\de}=+\infty)=
%\P_0(M_{\th_\de}=+\infty)=\exp\l(-\int_0^\infty\frac{dS(z)}{S(z)-S(z-\delta)}\r)=0.
%$$
%Question: Is $\th_\de$ is finite for all $\de>0$ also for all $X$ in class 2 ?.
%\end{remark}
%h

\noindent
%\textcolor{blue}{When X is in Class 2, the stopping time $\theta_\de$ may not be a.s. finite as show Examples \ref{ex1} and \ref{ex2} .}

\begin{example}\label{ex1}
Assume $l=0$ and
\[S(z):=1-\exp\big(-\e^z\big),\quad z\geq 0.\]
Then $X$ is in Class 2 (case b)) and
\begin{equation}\label{S1}
 \P_x(\theta_\delta<+\infty)=\exp\Big(-\int_{\e^{x\vee \de}}^{+\infty} \frac{du}{\e^{(1-\rho)u}-1}\Big)<1
\end{equation}
where $\rho:=\e^{-\de}$. Indeed,
\[S(z)-S(z-\delta) =\exp\big(-\e^z\big)\Big(\exp\big((1-\rho)\e^z\big)-1\Big)\]
and
\[ \P_x(\theta_\delta<+\infty)=
\exp\Big(-\int_{x\vee \de}^{\infty}\frac{\e^z dz}{\exp\big((1-\rho)\e^z\big)-1}\Big) \]
Setting $u=\e^z$ yields \eqref{S1}.
\end{example}

\begin{example}\label{ex2}
Assume $l=0$ and $\displaystyle S(z):=1-\e^{-z},\, z\geq 0$. Reflecting Brownian motion with drift 1 is a particular example  satisfying these assumptions. 
Clearly,  $X$ is in Class 2 (case b). It straightforward to show (and left to the reader)  that   $\P_x(\theta_\delta<+\infty)=1$.  
\end{example}

\subsection{Maximum drawdown} %Brockwell's and Malyutin's formulas}%The connection with the maximum drawdown}
\label{sec3}

%This connection is established in \cite{fitzsimmonsetal23}, and we repeat it here to make the paper more readable.

%The theorem given below is therein proved using  techniques from stochastic calculus.

Recall from (\ref{MXD}) the definition of $D^-_t$. Here we study this functional up to the first hitting time ${H_\eta}$, where $\eta$ is assumed to be bigger than the initial value of $X$. The distribution of
$D^-_{H_\eta}$ is given in (\ref{eqSaVa}) below. Brockwell derived in \cite{brockwell74}  this distribution  for Brownian motion with non-negative drift. Malyutin in \cite{malyutin87} gives a formula characterizing the joint distribution of ${H_\eta}$ and $D^-_{H_\eta}$  corresponding to (\ref{ei11}) below, but the r\^ole of $l$ is not transparent in Malyutin's work. The formula in \cite{malyutin87} is made explicit for  Brownian motion with drift, Ornstein-Uhlenbeck processes and Bessel processes. We remark also that in \cite{salminenvallois20}, see Proposition 2.3, the distribution of $D^-_{H_\eta}$  is derived using stochastic calculus. %The result is given in the next theorem.%; in this case $l=-\infty$ and $S'(x)= \exp(-2\mu x),\, \mu\geq 0$.

\begin{theorem}
\label{SaVa}
%Assume that $l=-\infty$ for the diffusion $X$.
Assume  that the underlying diffusion is in class 1 or 2. Then ${H_\eta}$ is finite $\P_x$-a.s. for $x<\eta$, and the
distribution of  $D^-_{H_\eta}$  is given for $x<\eta$ and $0\leq y\leq \eta-l$ by
\begin{equation}
\label{eqSaVa}
  \P_x(D^-_{H_\eta}<y)=\exp\left(-\int_{x\vee(y+l)}^{\eta}\frac{S(dz)}{S(z)-S(z-y)}\right).
\end{equation}
\end{theorem}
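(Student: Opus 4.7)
\bigskip

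The plan is to derive this theorem as a direct corollary of the extended Lehoczky formula in Theorem \ref{prop_L}, specifically equation (\ref{eq:ei01}), by exploiting the duality between $D^-_{H_\eta}$ and $M_{\theta_\delta}$ already hinted at in the introduction.

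First I would dispense with the finiteness statement $\P_x(H_\eta<\infty)=1$ for $x<\eta$. For Class~1 this is immediate from recurrence. For Class~2 the assumption $S(+\infty)<\infty$ (together with $S(l)=-\infty$ or reflection at $l$) forces $X_t \to +\infty$ almost surely, so any level $\eta>x$ is hit in finite time.

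The heart of the proof is the path identity
\[
\{D^-_{H_\eta}<y\}\;=\;\{M_{\theta_y}>\eta\}.
\]
To see this, I would argue by double inclusion using that $M$ is continuous and non-decreasing and that, on $\{H_\eta<\infty\}$, we have $M_{H_\eta}=\eta$ and $M_{H_\eta}-X_{H_\eta}=0$. If $D^-_{H_\eta}<y$, then no drawdown of size $\ge y$ occurs before $H_\eta$, so $\theta_y>H_\eta$, which forces $M_{\theta_y}\ge M_{H_\eta}=\eta$; strict inequality $M_{\theta_y}>\eta$ then follows because the running maximum must strictly increase between $H_\eta$ and $\theta_y>H_\eta$, the drawdown at $H_\eta$ being zero. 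Conversely, if $M_{\theta_y}>\eta$, then the level $\eta$ is reached at some time $t<\theta_y$; by the definition of $\theta_y$ the drawdown stays strictly below $y$ on $[0,\theta_y)$, and in particular on $[0,H_\eta]\subset [0,\theta_y)$, giving $D^-_{H_\eta}<y$.

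With this identity in hand, Theorem \ref{SaVa} follows at once by applying (\ref{eq:ei01}) with $\delta$ replaced by $y$ and the level $y$ there replaced by $\eta$: the hypothesis $0\le y\le \eta-l$ of Theorem \ref{SaVa} is exactly what guarantees $\eta\ge x\vee(y+l)$, which is the validity range required in (\ref{eq:ei01}). The main (minor) obstacle is just bookkeeping of strict versus non-strict inequalities in the set identity above; once continuity of the sample paths is invoked, the argument is purely topological and no additional analytic work is needed. The appearance of $x\vee(y+l)$ on the right-hand side of (\ref{eqSaVa}) is then inherited directly from the lower-bound-aware form of Lehoczky's formula established in Theorem \ref{prop_L}.
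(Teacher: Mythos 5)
Your proof is correct and follows essentially the same route as the paper, which establishes Theorem \ref{SaVa} precisely by combining the a.s.\ identity (\ref{DM1}) between $\{D^-_{H_\eta}\leq\delta\}$ and $\{M_{\theta_\delta}\geq\eta\}$ with Lehoczky's formula (\ref{eq:ei01}). The only minor caveat is that the strictness of $M_{\theta_y}>\eta$ on $\{D^-_{H_\eta}<y\}$ is not a purely topological consequence of path continuity (the running maximum need not increase after $H_\eta$ on a fixed path); it holds a.s.\ by the regularity of the diffusion at $\eta$, or more simply one passes from the paper's non-strict identity to (\ref{eqSaVa}) via the atomlessness of the law of $M_{\theta_y}$ evident from (\ref{eq:ei01}).
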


A striking observation is the similarity of the right hand sides of the formulas (\ref{eqSaVa}) and (\ref{eq:ei01}). Indeed,  there is a close connection between $D^-_{H_\eta}$ and $M_{\theta_\delta}$ which can be deduced without any knowledge of the explicit forms of the distributions. To see this, note that
\begin{equation}\label{theta2}
  \theta_\delta=\inf\{t\geq 0\, :\,  D^-_t>\delta\},
\end{equation}
and then a.s.
\begin{equation}\label{DM1}
   \{ D^-_{H_\eta}\leq \delta\}=\{\theta_\delta\geq H_\eta\}=\{M_{\theta_\delta}\geq \eta\},
\end{equation}
where the first equality follows from (\ref{theta2}) and the second one from  the monotonicity of $t\mapsto M_t$. We may thus deduce from Lehoczky's formula (\ref{eq:ei01})  the distribution function of $D^-_{H_\eta}$, and vice versa.   %We refer also to Malyutin \cite{malyutin87} % and it is allowed that  $l>-\infty$.

%\textcolor{red}{ Question: Is it possible to deduce  L's formula in case $l>-\infty$ in a simple way (e.g. using Markov property) or should one check L's proof from the beginning ? And is the excursion theoretical proof of L's formula valid also in case $l>-\infty$ ?}

%\noindent
Next we characterize the joint distribution  of  $D^-_{H_\eta}$ and $H_\eta$. The proof is presented in Section 5 and follows from excursion theoretical calculations needed also for the proof of Lehozcky's formula. % In  \cite{salminenvallois20} the joint distribution of  $D^-_{H_\eta}$ and $H_\eta$ is not discussed. This ``gap'' is covered by the next result. %This distribution has been derived earlier by  Malyutin \cite{malyutin87}; however, the r\^ole of the left end point $l$ is not transparent in Malyutin's work.

% \begin{equation}
%\label{ei}
%\P_x(D^-_{H_\beta}<u)=
% \begin{cases}
%1,&\hskip-2cm u\geq \beta-l,\\
%&\\
%{\exp\left(-\int_{S(u+l)}^{S(\beta)}\frac{dy}{y-S(S^{-1}(y)-u)}\right),}&x-l\leq u\leq \beta-l.\\
%&\\
%{\exp\left(-\int_{S(x)}^{S(\beta)}\frac{dy}{y-S(S^{-1}(y)-u)}\right),}&0\leq u\leq x-l.\\\end{cases}
%\end{equation}

\begin{theorem}
\label{SaVa0}
The joint distribution  of $D^-_{H_\eta}$ and $H_\eta$ is determined  for $\eta >x\vee (y+l)$ by
\begin{align}
\label{eiNew}
\nonumber
\hskip-1cm\E_x\left(\exp(-\alpha H_\eta);\,   D^-_{H_\eta}<y\right)
&=
\frac{\psi_\alpha(x)}{\psi_\alpha(x\vee(y+l))}\\
&\hskip1cm\times\exp\lp-\int_{x\vee(y+l)}^\eta b_\alpha(z,y) S(dz)\rp,
\end{align}
where $\alpha>0$ and $b_\alpha$ is as in (\ref{B}).
\end{theorem}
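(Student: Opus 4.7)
The plan is to reduce the statement to Lehoczky's formula (Theorem \ref{prop_L}) using the event identity (\ref{DM1}) and the strong Markov property at $H_\eta$. By path continuity and the atomlessness of the distribution of $D^-_{H_\eta}$ for regular diffusions, the identity $\{D^-_{H_\eta}\leq\delta\}=\{\theta_\delta\geq H_\eta\}$ sharpens almost surely to $\{D^-_{H_\eta}<y\}=\{H_\eta<\theta_y\}=\{M_{\theta_y}>\eta\}$, so the target quantity becomes $\E_x(\e^{-\alpha H_\eta};\,H_\eta<\theta_y)$.

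On the event $\{H_\eta<\theta_y\}$ one has $X_{H_\eta}=M_{H_\eta}=\eta$ and $DD_{H_\eta}=0$, so the shifted process $(X_{H_\eta+t})_{t\geq 0}$ begins at $\eta$ with a trivial past maximum; its future drawdown therefore depends only on its own evolution. By the strong Markov property, $\theta_y-H_\eta$ has, conditionally on $\cF_{H_\eta}$ restricted to the event, the same law as $\theta_y$ under $\P_\eta$. This yields the factorization
\begin{equation*}
\E_x\lp\e^{-\alpha\theta_y};\,M_{\theta_y}>\eta\rp=\E_x\lp\e^{-\alpha H_\eta};\,H_\eta<\theta_y\rp\cdot\E_\eta\lp\e^{-\alpha\theta_y}\rp,
\end{equation*}
so the target expectation equals the ratio of the two sides.

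To compute the ratio I would apply (\ref{ei00}) with $\beta=0$. Viewing the joint Laplace transform as the $\beta$-transform of a subprobability measure in $M_{\theta_y}$ gives
\begin{equation*}
\E_x\lp\e^{-\alpha\theta_y};\,M_{\theta_y}\in du\rp=\frac{\psi_\alpha(x)}{\psi_\alpha(x\vee(y+l))}c_\alpha(u;y)\exp\lp-\int_{x\vee(y+l)}^u b_\alpha(z;y)dS(z)\rp dS(u),
\end{equation*}
for $u\geq x\vee(y+l)$. Since $\eta>x\vee(y+l)$, integrating over $u>\eta$ and splitting the inner exponential at $\eta$ yields a prefactor $\frac{\psi_\alpha(x)}{\psi_\alpha(x\vee(y+l))}\exp(-\int_{x\vee(y+l)}^\eta b_\alpha(z;y)dS(z))$ times the tail integral
\[F(\eta):=\int_\eta^\infty c_\alpha(u;y)\exp\lp-\int_\eta^u b_\alpha(z;y)dS(z)\rp dS(u).\]
Applying (\ref{ei00}) with starting point $\eta$ (where $\eta\vee(y+l)=\eta$) gives $\E_\eta(\e^{-\alpha\theta_y})=F(\eta)$, so $F(\eta)$ cancels in the ratio and (\ref{eiNew}) follows.

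The main obstacle is the strong Markov step: one must carefully justify that the drawdown clock restarts at $H_\eta$ on $\{H_\eta<\theta_y\}$. The crucial observation is that $M_{H_\eta}=X_{H_\eta}=\eta$, so after $H_\eta$ the running maximum of the shifted process coincides with $M_{H_\eta+t}$ and the past maximum plays no further role. The remaining steps are a direct application of Lehoczky's formula and an algebraic cancellation.
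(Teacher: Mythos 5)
Your argument is correct, but it takes a genuinely different route from the paper. The paper proves Theorem \ref{SaVa0} in one line: by (\ref{DM1}) and (\ref{SETX1}) the left-hand side of (\ref{eiNew}) equals $\lambda(\eta;x)=\E_x\big(\e^{-\alpha H_\eta}{\bf 1}_{\{\eta\le M_{\theta_y}\}}\big)$, the very function whose closed form is the content of the excursion-theoretic Lemma \ref{lT2}; so Malyutin's and Lehoczky's formulas are siblings born of the same computation. You instead treat Lehoczky's formula (\ref{ei00}) as a black box and extract (\ref{eiNew}) from it: after the same reduction via (\ref{DM1}), you split $\theta_y$ at $H_\eta$ on $\{H_\eta<\theta_y\}$, where $M_{H_\eta}=X_{H_\eta}=\eta$ guarantees that the drawdown clock restarts, obtaining $\E_x(\e^{-\alpha\theta_y};M_{\theta_y}>\eta)=\E_x(\e^{-\alpha H_\eta};H_\eta<\theta_y)\,\E_\eta(\e^{-\alpha\theta_y})$, and then divide two instances of (\ref{ei00}) so that the tail integral $F(\eta)$ cancels. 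All the steps check out: the event identities hold up to null sets by the continuity of the laws in (\ref{eq:ei01}) and (\ref{eqSaVa}); the disintegration of (\ref{ei00}) in $\beta$ is exactly Remark \ref{rmk0}.3; the denominator $\E_\eta(\e^{-\alpha\theta_y})=F(\eta)$ is strictly positive since $c_\alpha>0$; and the hypothesis $\eta>x\vee(y+l)$ is precisely what makes $\eta\vee(y+l)=\eta$ and legitimizes splitting the inner integral at $\eta$. What your route buys is a proof of Malyutin's formula with no excursion theory at all, exhibiting (\ref{eiNew}) as a formal corollary of (\ref{ei00}); what it costs is that it is parasitic on Theorem \ref{prop_L}, whose proof in the paper already contains the $\lambda$-computation, so within the paper's architecture it is a detour. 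It is also distinct from the alternative sketched in the remark following Theorem \ref{SaVa0}, which conditions on an independent exponential time and uses $h$-transforms rather than a strong Markov splitting at $H_\eta$.
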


\begin{remark}
\label{rmk00}
An alternative proof of (\ref{eiNew}) can be done via conditioning. To explain this briefly,  let $T$ be an exponentially distributed random variable with mean $1/\alpha$. Assume that $T$ is independent of $X$. Recall that $r=+\infty$  and consider for $l<x<\eta$ 
 \begin{align*}
%\label{ei21}
%\nonumber
\hskip-.5cm\E_x\left(\exp(-\alpha H_\eta);\,   D^-_{H_\eta}<y\right)
%\\
%&\hskip2cm
&=\P_x\left(H_\eta<T ,\,   D^-_{H_\eta}<y\right)
\\
%&\hskip3cm
%\nonumber
&=\P_x\left(  D^-_{H_\eta}<y\,|\,H_\eta<T \right)\P_x\left( H_\eta<T \right).
\end{align*}
The conditional distribution above can be found by applying the theory of $h$-transforms (and (\ref{eqSaVa})). This yields
\begin{align}
\label{ei11}
\nonumber
&\E_x\left(\exp(-\alpha H_\eta);\,   D^-_{H_\eta}<y\right)
\\
&\hskip2cm
=
\frac{\psi_\alpha(x)}{\psi_\alpha(\eta)}
\exp\lp-\int_{x\vee(y+l)}^\eta c_\alpha(z,y)
\frac{\psi_\alpha(z-y)}{\psi_\alpha(z)} S(dz)\rp,
\end{align}
and it can be shown that (\ref{ei11}) and (\ref{eiNew}) are equivalent.\\
%\textcolor{blue}{It is not clear with the changes in \eqref{eiNew}.}
\end{remark}

\subsection{Additional remarks}
\label{ADD}

It is possible to develop the above concepts and results to various directions. Here we indicate  some of these generalizations.

\smallskip
\noindent
{\bf 1. More general drawdown time depending on the maximum.}
Let  $\phi$ be a continuous positive function  consider the stopping time
\begin{align*}
&
\theta_\phi := \inf\{t\geq  0\, :\,  M_t -X_t = \phi(M_t)\}.
\end{align*}
Note that if the function $\phi$ is constant and equal to $\de$, then $\theta_\phi$ coincides with the stopping time $\theta_\de$ defined by \eqref{dtheta}.
The joint distribution of $M_{\theta_\phi}$ and  $\theta_\phi$ is characterized in  \cite{lehoczky77}.
% We do not develop Lehoczky's analysis further but merely
Recall (in case $l=-\infty$) therefrom
\begin{equation*}
%\label{eq:ei01}
\P_x(M_{\theta_\phi}>y)=\exp\l(-\int_{x}^y\frac{dS(z)}{S(z)-S(z-\phi(z))}\r).
\end{equation*}
It would be easy to adapt our proof of Theorem \ref{prop_L} to determine the distribution of the pair $\big( M_{\theta_\phi}, \theta_\phi\big)$.
The stopping time $\theta_\phi$ appears also in studies on Skorokhod embedding, see  Az\'ema and Yor \cite{azemayor79} and  \cite{azemayor79b}.

\smallskip
\noindent
{\bf 2. Maximum drawdown for killed diffusions.}
Let $g$ be a piecewise continuous non-negative (but non-vanishing)  function and define an additive functional associated with a diffusion $X$ via
 \begin{equation}
 \label{ADDI}
 A_g(t):=\int_0^t g(X_s)\, ds,
  \end{equation}
where $g$ is bounded and piecewise continuous. Let $T$ be an exponentially distributed random variable with mean $1$ and independent of  $X$. We assume that $X$  in class 1 or 2 and introduce for $t>0$
 \begin{equation}
 \label{KD}
 \widehat X_t:=
 \begin{cases}
 X_t,&  A_g(t)<T,\\
 \partial,&A_g(t)\geq T,\\
 \end{cases}
  \end{equation}
where $\partial$ is the so called  cemetary state isolated from the state space of $X$. As is well known,  $\widehat X=(\widehat X_t)_{t\geq 0}$ is a diffusion and it is said that $\widehat X$ is obtained from $X$ by killing via the additive functional defined in (\ref{ADDI}). Let $\varphi_{\alpha,g}$  and $\psi_{\alpha,g}$ be a decreasing and an increasing, respectively, solution of the generalized differential equation
\begin{equation}
\label{for2b}
\frac{d}{dm}\frac{d}{dS} u =(\alpha + g) u
\end{equation}
%\textcolor{blue}{The label "for" appears twice. I have changed the label for2 -> for2b . It does not matter because we never cite these equations.}\\
describing the resolvent density of $\widehat X$ similarly as explained in (\ref{for1}),  (\ref{for1}), (\ref{wronskian}), and (\ref{greenkernel}) for the (unkilled) diffusion $X.$
 Then letting $\widehat\E$ denote the expectation operator associated with  $\widehat X$ it can be proved
 \begin{align}
\label{ei11X}
\nonumber
&\widehat\E_x\left(\exp(-\alpha H_\eta);\,   D^-_{H_\eta}<u\right)\\
\nonumber
&\hskip2cm
=\E_x\left(\exp(-\alpha H_\eta- A_g(H_\eta) );\,   D^-_{H_\eta}<u\right)
\\
\nonumber
&\hskip2cm
=
\frac{\psi_{\alpha,g}(x)}{\psi_{\alpha,g}(\eta)} \exp\lp-\int_{x\vee(u+l)}^\eta c_{\alpha,g}(z,u)
\frac{\psi_{\alpha,g}(z-u)}{\psi_{\alpha,g}
(z)} S(dz)\rp
\\
&\hskip2cm
=\exp\lp-\int_{x\vee(u+l)}^\eta b_{\alpha,g}(z,u) S(dz)\rp,
\end{align}
where  $c_{\alpha,g}$ and  $b_{\alpha,g}$ are constructed as  $c_{\alpha}$ and $b_{\alpha}$ in (\ref{C}) and (\ref{B}), respectively, but with the functions  $\varphi_{\alpha,g}$  and $\psi_{\alpha,g}$.

%\smallskip
%\noindent
%{\bf 3. Further cases.} To discuss: a) Processes which are not in Class 1 or 2. For instance killed Brownian motion. For $D^-_{H_\eta}$ we can proceed by conditioning. b) Drawdown time for a diffusion killed via an additive functional. Since we have a formula for max decrease perhaps   (\ref{DM1}) can be applied to have a result for the drawdown time.\\
%\textcolor{blue}{I suggest  not developing new examples.}

\subsection{Examples}
\label{secX}

\begin{example}
\label{BM}
In our first example  $X=(X_t)_{t\geq 0}$ is a standard Brownian motion. The joint distribution of  $\theta_\delta$ and $M_{\theta_\delta}$ in this case was characterized by Taylor in \cite{taylor75} -- a few years before Lehoczky \cite{lehoczky77}. In fact, in \cite{taylor75} Brownian motion with drift is analyzed. For $X$ we have % Let now  be a standard Brownian motion initiated at x. Then
 $$
 S(y)=y,\ \varphi_\alpha(y)=\e^{-y\sqrt{2\alpha}},\ \psi_\alpha(y)=\e^{y\sqrt{2\alpha}}, \text{ and } w_\alpha=2\sqrt{2\alpha},
 $$
 and using these in Theorem \ref{prop_L} yields  formulas
 \begin{align}
\label{BM1}
&\E_x\left(\exp(-\alpha \theta_\delta -\beta M_{\theta_\delta})\right)
= \frac{\sqrt{2\alpha}\,\e^{-\beta x}}{\sqrt{2\alpha}\,\cosh(\delta\sqrt{2\alpha})+
\,\beta \sinh(\delta\sqrt{2\alpha})},
\end{align}
 \begin{align}
\label{BM11}
&\E_x\left(\exp(-\alpha \theta_\delta)\right)
= \frac{1}{\cosh(\delta\sqrt{2\alpha})},
\end{align}
and for $y\geq x $
\begin{equation}
\label{BM2}
\P_x(M_{\theta_\delta}>y)=\e^{- (y-x)/\delta},
\end{equation}
i.e.,  $(M_{\theta_\delta}-x)/\de$ is exponentially distributed.
 \end{example}

\begin{example}
\label{RBM}
Let $X=(X_t)_{t\geq 0}$ be a Brownian motion reflected at 0 and living in $[0,+\infty)$.
Since the scale function of $X$ is $S(z)=z$ we have from (\ref{eq:ei01}) for
 $y\geq x\vee\de$
 \begin{equation}\label{RBM1}
 \P_x\left(M_{\theta_\delta}>y\right)=  \exp\left(-\frac 1\de\left(y-x\vee\de\right)\right).
 \end{equation}
To analyze formula (\ref{ei00}) recall from \cite{borodinsalminen15} p. 124 that for $X$
$$
\psi_\alpha(y)=\cosh(y\sqrt{2\alpha}),\ \ \varphi_\alpha(y)=\e^{-y\sqrt{2\alpha}},\ \
w_\alpha=\sqrt{2\alpha}.
$$
Straightforward calculations show that the functions $b_\alpha(y;\de)$ and $c_\alpha(y;\de)$ defined in (\ref{B}) and (\ref{C}), respectively, do not depend on $y$ and are given by
$$
b_\alpha(y;\de)=\sqrt{2\alpha}\cosh(\de\sqrt{2\alpha})/\sinh(\de\sqrt{2\alpha}),
$$
%\ {\rm{and}}\
and
$$
c_\alpha(y;\de)=\sqrt{2\alpha}/\sinh(\de\sqrt{2\alpha}).
$$
Hence, from Theorem \ref{prop_L}
 \begin{align*}
%\label{RBM10}
%\nonumber
\E_x\big(\exp(-\alpha \theta_\delta &-\beta M_{\theta_\delta})\big)\\
&=\frac{\cosh(x\sqrt{2\alpha})}{\cosh((x\vee\de)\sqrt{2\alpha})} \frac{\sqrt{2\alpha}\,\e^{-\beta(x\vee\de)}}{\sqrt{2\alpha}\,\cosh(\delta\sqrt{2\alpha})+
\,\beta \sinh(\delta\sqrt{2\alpha})}.
\end{align*}
In particular, for $\beta=0$  and $x=0$ 
 \begin{equation}
\label{RBM11}
\E_0\left(\exp(-\alpha \theta_\delta )\right)
= \frac1{\cosh^2(\delta\sqrt{2\alpha})}.
\end{equation}
Recall that 
 \begin{equation*}
%\label{RBM1A}
\E_0\left(\exp(-\alpha \sigma_\delta )\right)
= \frac1{\cosh(\delta\sqrt{2\alpha})},
\end{equation*} 
where $ 
\sigma_\delta:=\inf\{t: X(t)=\delta\}.
$
Hence, we have the relation  
$$ 
\theta_\delta\,{\stackrel {(d)}{=}}\,\sigma^{(1)}_\delta+\sigma^{(2)}_\delta,
$$
where $\sigma^{(1)}_\delta$ and $\sigma^{(2)}_\delta$ are independent,
$$
\sigma^{(1)}_\delta\,{\stackrel {(d)}{=}}\, \sigma^{(2)}_\delta\,{\stackrel {(d)}{=}}\,\sigma_\delta,
$$
and ${\stackrel {(d)}{=}}$ means that ``...is indentical in law with...''.  For the maximum drawdown we have from (\ref{eiNew}) for $x,y\in[0,\eta]$ 

\[
%\label{RBM13}
\E_x\left(\exp(-\alpha H_\eta);\, D^-_{H_\eta}<y\right)
=\frac{\cosh\big(x \sqrt{2\alpha}\big)}{\cosh\big((x\vee y) \sqrt{2\alpha}\big)} \qquad \qquad \qquad \qquad\]
\[\qquad \qquad \qquad \qquad \qquad \qquad \qquad \qquad \times \exp\lp-\frac{\sqrt{2\alpha}\cosh(y\sqrt{2\alpha})}{\sinh(y\sqrt{2\alpha})}(\eta-x\vee y)\rp.
\]
\end{example}

%\begin{example}
%\label{GBM}
%For $(W_t)_{t\geq 0}$ a standard BM started at 0 the geometric Brownian motion initiated at $x>0$ is defined via
%$$
%X_t :=x\exp\lp\lp\mu-\sigma^2/2\rp t+\sigma W_t\rp.
%$$
%Letting  $\sigma^2=1$ then the scale function is
%$$
%S(x)=\begin{cases}-x^{-2\mu +1}/(2\mu -1),& \mu\not=1/2\\
%\ln x,& \mu=1/2,\\
%\end{cases}
%$$
%and the speed measure is $m(dx)=2x^{2\mu-2}dx$. Hence, if $\mu=1/2$ then $(X_t)_{t\geq 0}$ is in Class 1, and if $\mu>1/2$  in Class 2. Moreover, with  $ \nu=\mu-\frac12$
%$$
%\psi_\alpha(x)=x^{\sqrt{\nu^2+2\alpha}-\nu},\ \varphi_\alpha(x)=x^{-\sqrt{\nu^2+2\alpha}-\nu},\
%w_\alpha=2\sqrt{\nu^2+2\alpha}.
%$$
%\end{example}

\section{Proofs of Theorems \ref{prop_L} and  \ref{SaVa0}} \label{sec4}

This section  is organized in 4 subsections. In the first one we present  the notation and the basic facts from the classical excursion theory for excursions from a point.   The second one concerns the theory for excursions below the maxima of the underlying diffusion.  A crucial element hereby is the master formula (\ref{master}) due to  Fitzsimmons \cite{fitzsimmons13}, see also \cite{pitmanyor03a}. These two subsections can be seen as background material to make the paper more readable. The main ingredients of the proof are given in Subsections \ref{sec33} and \ref{sTpL}. In particular, the functions  $b_\alpha$ and $c_\alpha$, cf. (\ref{B}) and (\ref{C}), respectively, are expressed   in terms of the excursion law, cf. \cite{fitzsimmons22}.  Finally, the proofs of Theorems \ref{prop_L} and
 \ref{SaVa0} are collected from the presented facts at the end of Subsection \ref{sTpL}.

\subsection{Excursions around a fixed point} \label{sec4a}
%For this, we study first the excursion process associated with the excursions around a %fixed level $y\in I$.
Let $( L^y_t\, :\, t \ge 0, y \in I)$ denote a jointly continuous version  of local time for $X$  normalized to be the occupation density relative to $m$. Then it holds
\begin{equation}
\label{LT}
\E_x\lp\int_0^\infty \e^{-\alpha t} dL^y_t\rp
 = g_\alpha(x,y)
\end{equation}
where $g_\alpha$ is the Green function as given in (\ref{greenkernel}).
Fixing a level $y\in I$, let $(T^y_s\, :\,  s \ge 0)$ be the right continuous inverse of $(L^y_t\, :\,  t \ge 0)$. % see \cite{revuzyor01}, %\cite{\rogerswilliams}, ..., is related to the Itˆo excursion law for excursions %from level $y$, as follows.
%Let $G^y$ denote the (random) set of left-hand  endpoints of intervals contiguous to the level set $\lb 0< t <\zeta \,  : \, X_t = y\rb $.
Define the excursions
around $y$ via
\begin{equation}
\label{EXY}
\kappa^y_t(s):=\begin{cases}
 X_{T^y_{t-}+s}, & 0\le s<T^y_{t}- T^y_{t-}, \\
 y, & s\ge   T^y_{t}- T^y_{t-}.
\end{cases}
\end{equation}
%Define the
%hitting time Ty by
%Ty = inf{t > 0 : Xt = y} (inf ∅ = +∞).
 Then, $\kappa^y_t$ belongs to the space
%\eject
\[U^y:=\big\{e \in C^y:
 \exists\  \zeta(e)<\infty \ {\text{such that}}\ e(t)\not=y\  {\text{for\ all}}\  0<t< \zeta(e)\]
\[{\text{and}}\ e(t)=y\ {\text{for\ all}}\  t\geq\zeta(e) \big\},\]
where $C^y$ denotes the class of continuous functions defined on $[0,+\infty)$ such that $e(0)=y$. Clearly,  $\zeta(e)=\inf\{t>0 ; e(t)=y\}$ for $e\in U^y$. Let $\cU^y$ denote the smallest $\sigma$-algebra in $U^y$ making all coordinate mappings
$t\mapsto e(t)$ measurable. Then $(U^y,\cU^y)$ is called an excursion space.
Notice that if $T^y_{t}- T^y_{t-}=0$ then $\kappa^y_t(s)=y$ for all $s\ge 0$, and this  is also taken to be an element in $U^y$ called $e^y$.  The It\^o excursion law $n^y$, which is a $\sigma$-finite non-negative measure defined on $(U^y,\cU^y)$, is determined by the identity
\begin{align}
\label{ITOO}
\nonumber
\E_x\Big(\sum_{t>0}Z_{T^y_{t-}}F(\kappa^y_t)\Big)&=\E_x\lp\int_0^\infty Z_{T^y_{t}} dt\rp n^{y}(F)\\
 &= \E_x\lp\int_0^\infty Z_{t} dL^y_t\rp n^{y}(F),
\end{align}
where $F$ is  a non-negative measurable function defined on $U^y$ such that $F(e^y)=0$, and $(Z_t)_{t\geq 0}$ is  a non-negative and progressively measurable process.
For (\ref{ITOO}), see \cite{revuzyor01} (1.10) Proposition p. 475 and (2.6) Proposition p. 483 where the latter reference is for Brownian motion but can be extended to our case. We refer also to Maisonneuve \cite{maisonneuve75}  (4.1) Theorem p. 401 where general strong Markov processes are considered.  

To give an explicit description of $n^y$, let $(Q^y_t)_{t\geq 0} $ denote the Markovian semigroup for the diffusion ${X^y}$ obtained from $X$ by killing at the hitting time $H_y$, that is, for $x\not= y$ and $A$ a Borel set in $(l,r)\setminus\{y\}$
\begin{equation}
\label{KILL}
\nonumber
Q^y_t(x,A):=\P_x\lp  X^y_t\in A\rp=\P_x\lp  X_t\in A,\, t<H_y\rp.
\end{equation}
It is well known that ${X^y}$ has a transition density $q^y$, say,  with respect to the speed measure. Hence it holds
\begin{equation}
\label{KILL20}
\nonumber
\P_x\lp X^y_t\in A\rp=\int_A q^y(t;x,z)\,m(dz).
\end{equation}
Recall  that $q^y(t;x,y)=0$ and that $q^y(t;\cdot,\cdot)$ is a symmetric function of $x$ and $z,$ i.e., $q^y(t;x,z) =q^y(t;z,x)$. Finally, let $f^H_{xy}$ denote the density of the $\P_x$-distribution of $H_y$. Now we are ready to characterize   the finite dimensional distributions of the It\^o excursion law $n^y$ for excursions below $y$.

\begin{theorem}
\label{ITO}
For $x_i<y, i=1,2,\dots,n,$ and $0<t_1<t_2<...<t_n$ it holds
\begin{align}
\label{FINIX}
\nonumber
&n^y\lp e(t_1)\in dx_1, e(t_2)\in dx_2, ... ,
e(t_n)\in dx_n\rp\\
%\nonumber
 &\hskip2cm =\eta_{t_1}(dx_1) Q^y_{t_2-t_1}(x_1,dx_2)\cdot ...
 \cdot Q^y_{t_n-t_{n-1}}(x_{n-1},dx_n),
\end{align}
where $(\eta_t)_{t\geq 0}$ constitutes an entrance law for $(Q^y_t)_{t\geq 0} $, i.e., it satisfies
\begin{equation}
\label{ENT1X}
\nonumber
\int_{(l,r)\setminus y}\eta_t(dx) Q^y_s(x,A)
=\eta_{t+s}(A),
\end{equation}
and is given by
\begin{equation}
\label{ENT}
%\nonumber
\eta_t(dx)=f^H_{xy}(t)m(dx).
\end{equation}
\end{theorem}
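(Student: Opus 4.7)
The strategy is to establish the finite-dimensional distributions of $n^y$ in two stages: first a Markov-type property showing that, under $n^y$, the process $(e(r))_{r>0}$ evolves according to the killed semigroup $Q^y$ (which yields the product-of-kernels structure of (\ref{FINIX})), and then a direct computation of the one-dimensional entrance law $\eta_t$ by applying (\ref{ITOO}) to a suitable integrated test functional.

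\medskip

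\noindent\textbf{Step 1 (Markov property under $n^y$).} Fix $0<s<t$ and bounded Borel $f,g$ vanishing at $y$. Apply (\ref{ITOO}) to $F(e):=f(e(s))\,g(e(t))$ with $Z_u:=\e^{-\alpha u}$, so the left-hand side reads
\[
\E_x\Bigl(\sum_u \e^{-\alpha T^y_{u-}}\, f(X_{T^y_{u-}+s})\, g(X_{T^y_{u-}+t})\, \indic_{\{T^y_{u-}+t<T^y_u\}}\Bigr).
\]
For each $u$, $T^y_{u-}+s$ is a stopping time and on $\{T^y_{u-}+s<T^y_u\}$ the process sits at $\kappa^y_u(s)\neq y$; by the strong Markov property the residual excursion before returning to $y$ is distributed as the diffusion killed at $y$. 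Conditioning on $\cF_{T^y_{u-}+s}$ replaces $g(X_{T^y_{u-}+t})\indic_{\{t-s<H_y\circ\theta_{T^y_{u-}+s}\}}$ by $(Q^y_{t-s}g)(\kappa^y_u(s))$. Applying (\ref{ITOO}) a second time to the resulting sum with functional $F'(e):=f(e(s))(Q^y_{t-s}g)(e(s))\indic_{\{s<\zeta\}}$ yields
\[
n^y\bigl(f(e(s))g(e(t))\indic_{\{t<\zeta\}}\bigr)=n^y\bigl(f(e(s))(Q^y_{t-s}g)(e(s))\indic_{\{s<\zeta\}}\bigr),
\]
which is precisely the Markov property of $(e(r))$ under $n^y$ with semigroup $Q^y$. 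Iterating at times $t_1<\cdots<t_n$ gives (\ref{FINIX}) with $\eta_{t_1}(dx_1):=n^y(e(t_1)\in dx_1,\, t_1<\zeta)$, and the entrance-law identity (\ref{ENT1X}) is the $n=2$ case recast in terms of one-dimensional marginals.

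\medskip

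\noindent\textbf{Step 2 (Identification of $\eta_t$).} Take $F(e):=\int_0^{\zeta(e)} \e^{-\alpha r}\, f(e(r))\,dr$ and $Z_u:=\e^{-\alpha u}$, with $f\geq 0$ Borel supported on $(l,r)\setminus\{y\}$. Because the excursion intervals $[T^y_{u-},T^y_u)$ tile the time axis up to the Lebesgue-null set $\{X_s=y\}$, the left-hand side of (\ref{ITOO}) collapses to
\[
\E_x\Bigl(\int_0^\infty \e^{-\alpha s} f(X_s)\,ds\Bigr)=\int f(z)\, g_\alpha(x,z)\, m(dz),
\]
while the right-hand side is $g_\alpha(x,y)\int_0^\infty \e^{-\alpha r}\int f(z)\,\eta_r(dz)\,dr$. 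Specialising to $x=y$ and $f$ supported in $(l,y)$, the explicit form (\ref{greenkernel}) of the Green kernel gives
\[
\int_0^\infty \e^{-\alpha r} \eta_r(dz)\,dr = \frac{\psi_\alpha(z)}{\psi_\alpha(y)}\, m(dz) = \Bigl(\int_0^\infty \e^{-\alpha r} f^H_{zy}(r)\,dr\Bigr)\, m(dz),
\]
the last equality being a consequence of (\ref{for1}). Laplace-transform uniqueness yields $\eta_r(dz)=f^H_{zy}(r)\, m(dz)$ for $z<y$; the symmetric argument with $f$ supported in $(y,+\infty)$ settles the case $z>y$ and completes the proof of (\ref{ENT}).

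\medskip

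\noindent\textbf{Main obstacle.} The delicate point is the simultaneous application of the strong Markov property inside the sum over $u$ in Step 1. While each $T^y_{u-}$ is individually a stopping time, the family $(T^y_{u-})_u$ is indexed by local time; the rigorous justification amounts to recognising $\sum_u \delta_{(T^y_{u-},\kappa^y_u)}$ as an optional random measure with compensator $dL^y_s\otimes n^y$, so that conditioning at the predictable times $T^y_{u-}+s$ can be performed before the second invocation of (\ref{ITOO}). Once this bookkeeping is in place, the remainder is routine.
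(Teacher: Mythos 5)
Your proof is correct, but it is more self-contained than the paper's, which simply extracts the product structure (\ref{FINIX}) and the Laplace-transform identity $\int_0^\infty \e^{-\alpha t}\eta_t(A)\,dt = G_\alpha(y,A)/g_\alpha(y,y)$ from Rogers--Williams and only verifies the explicit form (\ref{ENT}) from the Green-function formulas. Your Step 2 is in substance the same computation as the paper's: once you set $x=y$, your identity $\int_0^\infty \e^{-\alpha r}\eta_r(dz)\,dr = \bigl(g_\alpha(y,z)/g_\alpha(y,y)\bigr)m(dz) = \bigl(\psi_\alpha(z)/\psi_\alpha(y)\bigr)m(dz)$ for $z<y$ is exactly the cited Laplace transform combined with (\ref{greenkernel}) and (\ref{for1}); the only difference is that you derive the left-hand identity directly from the compensation formula (\ref{ITOO}) with $F(e)=\int_0^{\zeta}\e^{-\alpha r}f(e(r))\,dr$ and (\ref{LT}) rather than quoting it, which is a nice gain in self-containedness at no cost. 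Your Step 1 supplies a proof of what the paper takes off the shelf, namely that $n^y$ is Markov with semigroup $Q^y$; the double application of (\ref{ITOO}) together with the strong Markov property at the stopping times $T^y_{u-}+s$ is the standard argument, and you correctly identify the one genuinely delicate point -- interchanging the conditioning with the sum over the random countable set of excursion intervals -- together with the standard remedy (view $\sum_u\delta_{(T^y_{u-},\kappa^y_u)}$ as an optional random measure with compensator $dL^y_s\otimes n^y$, or equivalently truncate to excursions of length exceeding $1/n$ and pass to the limit). Given that the paper itself does not prove this step, your sketch is at least as rigorous as the published argument, and the two approaches agree on everything that the paper actually writes out.
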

\begin{proof}
The result is extracted from \cite{rogerswilliams00II} No's 48-49 pp. 416-420, see also \cite{salminenvalloisyor07}. The only thing to clarify is the explicit formula (\ref{ENT}) for the entrance law. However, in ibid., we have the Laplace transform
\begin{equation}
\label{ENT2X}
\nonumber
\int_0^\infty \e^{-\alpha t}\eta_t(A)\,dt
=\frac{ G_\alpha(y,A)}{g_\alpha(y,y)}
\end{equation}
with $G_\alpha$ as given in (\ref{RES1}). Applying here the formulas (\ref{greenkernel}) and (\ref{for1}) yields
(\ref{ENT}).
\end{proof}

In the next proposition we give a useful way to determine the excursion law via a limiting procedure (cf. \cite{pitmanyor82} p. 437).  The construction is done for excursions below $y$, a similar formula is valid for excursions above $y$.
\begin{proposition}
\label{LIMIT}
Let $f_i, i=1,2,\dots,n,$  It holds for $x_i<y, i=1,2,\dots,n,$
\begin{align}
\label{LIMIT1}
\nonumber
&n^y\lp e(t_1)\in dx_1, e(t_2)\in dx_2, ... ,
e(t_n)\in dx_n\rp\\
%\nonumber
 &\hskip.5cm =\lim_{\epsilon\to 0+}\frac{Q^y_{t_1}(y-\epsilon, dx_1)}{S(y)-S(y-\epsilon)}\, Q^y_{t_2-t_1}(x_1,dx_2)\cdot ...
 \cdot Q^y_{t_n-t_{n-1}}(x_{n-1},dx_n).
\end{align}
\end{proposition}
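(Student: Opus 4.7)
The plan is to reduce Proposition \ref{LIMIT} to a one-point limit via Theorem \ref{ITO}. By (\ref{FINIX}) the LHS of (\ref{LIMIT1}) factors as
\[
\eta_{t_1}(dx_1)\,Q^y_{t_2-t_1}(x_1,dx_2)\cdots Q^y_{t_n-t_{n-1}}(x_{n-1},dx_n),
\]
and the RHS of (\ref{LIMIT1}) carries exactly the same chain of kernels $Q^y_{t_i-t_{i-1}}$ for $i\geq 2$. So the proposition reduces to the one-point assertion
\[
\lim_{\epsilon\to 0^+}\frac{Q^y_{t}(y-\epsilon,dx)}{S(y)-S(y-\epsilon)} \;=\; \eta_t(dx) \;=\; f^H_{xy}(t)\,m(dx),\qquad x<y,\ t>0,
\]
which I would establish as a statement about densities with respect to $m(dx)$. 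Using the symmetry $q^y(t;y-\epsilon,x) = q^y(t;x,y-\epsilon)$ of the killed transition density, this target becomes
\[
\frac{q^y(t;x,y-\epsilon)}{S(y)-S(y-\epsilon)} \longrightarrow f^H_{xy}(t) \qquad (\epsilon\to 0^+).
\]

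To verify this at the Laplace-transform level, I would invoke the strong Markov property of $X$ at $H_y$ to write the resolvent density of $X^y$ as
\[
g^y_\alpha(x,z) \;=\; g_\alpha(x,z) - \frac{\psi_\alpha(x)}{\psi_\alpha(y)}\,g_\alpha(y,z),\qquad x,z<y.
\]
Plugging in (\ref{greenkernel}), for $x<y-\epsilon<y$ this yields
\[
g^y_\alpha(x,y-\epsilon) \;=\; \frac{\psi_\alpha(x)}{w_\alpha\,\psi_\alpha(y)}\bigl[\varphi_\alpha(y-\epsilon)\psi_\alpha(y) - \psi_\alpha(y-\epsilon)\varphi_\alpha(y)\bigr].
\]
The bracket vanishes at $\epsilon=0$; expanding $\varphi_\alpha$ and $\psi_\alpha$ to first order about $y$ with respect to the scale function and using the Wronskian identity (\ref{wronskian}), it equals $w_\alpha[S(y)-S(y-\epsilon)] + o(S(y)-S(y-\epsilon))$. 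Hence
\[
\lim_{\epsilon\to 0^+}\frac{g^y_\alpha(x,y-\epsilon)}{S(y)-S(y-\epsilon)} \;=\; \frac{\psi_\alpha(x)}{\psi_\alpha(y)} \;=\; \E_x\bigl[e^{-\alpha H_y}\bigr] \;=\; \int_0^\infty e^{-\alpha t}f^H_{xy}(t)\,dt,
\]
which is the integrated (in $t$) form of the desired one-point convergence.

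The remaining step, and the one I expect to be the main obstacle, is to upgrade this Laplace-transform convergence to pointwise convergence in $t$. Under the standing smoothness assumptions on $S$ and $m$, both $t\mapsto q^y(t;x,y-\epsilon)/[S(y)-S(y-\epsilon)]$ and $t\mapsto f^H_{xy}(t)$ are continuous positive functions of $t$, and the inversion can be carried out either by a Tauberian argument exploiting monotonicity in $\epsilon$ for $\epsilon$ small, or via the spectral decomposition of the killed semigroup on an exhausting sequence of bounded subintervals; Laplace convergence for all $\alpha>0$ combined with continuity in $t$ then delivers pointwise convergence. Once the one-point limit is in hand, the full identity (\ref{LIMIT1}) follows immediately from Theorem \ref{ITO}.
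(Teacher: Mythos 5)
The paper gives no proof of Proposition \ref{LIMIT} at all: it is stated with a bare citation to Pitman--Yor (p.~437) and is then only used in integrated form via Proposition \ref{LIMITX}. So there is nothing in the paper to compare your argument against, and your proposal has to stand on its own. Its overall structure is the right one and is essentially the standard route: the reduction to the one-point statement $\lim_{\epsilon\to 0^+}Q^y_t(y-\epsilon,dx)/(S(y)-S(y-\epsilon))=\eta_t(dx)=f^H_{xy}(t)\,m(dx)$ via Theorem \ref{ITO} is correct (the trailing kernels are common to both sides, and weak convergence of the first factor suffices since they are integrated against bounded kernels), the use of the symmetry of $q^y$ in the speed measure is legitimate (the paper states it explicitly), and the resolvent computation is correct: the first-passage decomposition $g^y_\alpha(x,z)=g_\alpha(x,z)-\frac{\psi_\alpha(x)}{\psi_\alpha(y)}g_\alpha(y,z)$, the first-order scale expansion of the bracket, and the Wronskian identity do give $g^y_\alpha(x,y-\epsilon)/(S(y)-S(y-\epsilon))\to\psi_\alpha(x)/\psi_\alpha(y)$.

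The one genuine weak point is exactly the one you flag, but your proposed fix is not quite right as stated. Convergence of Laplace transforms for all $\alpha>0$ together with continuity in $t$ of the prelimit functions and of the limit does \emph{not} imply pointwise convergence; in general it only gives vague convergence of the measures $q^y(t;x,y-\epsilon)\,dt/(S(y)-S(y-\epsilon))$ to $f^H_{xy}(t)\,dt$, and the suggested monotonicity in $\epsilon$ of the ratio is not evident (the strong Markov property at $H_{y-\epsilon}$ expresses the ratio at $\epsilon'<\epsilon$ as a time-convolution of the ratio at $\epsilon$, not as a pointwise comparison). The route that actually closes this is your second suggestion: under the standing smoothness assumptions the killed semigroup admits a spectral (eigendifferential) representation $q^y(t;x,z)=\int e^{-\lambda t}\,\phi_\lambda(x)\phi_\lambda(z)\,\rho(d\lambda)$ with $\phi_\lambda(y)=0$, and dividing by $S(y)-S(y-\epsilon)$ and letting $\epsilon\to 0$ inside the integral produces the scale derivative $\phi_\lambda^{-}(y)$, yielding the Itô--McKean identity $f^H_{xy}(t)=-\partial q^y(t;x,z)/\partial S(z)\big|_{z=y^-}$ pointwise in $t$. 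Alternatively one can obtain that identity directly by differentiating $\P_x(H_y>t)=\int q^y(t;x,z)\,m(dz)$ in $t$ and integrating the generator $\frac{d}{dm}\frac{d}{dS}$ by parts, which avoids Laplace inversion altogether. With either of these substituted for the Tauberian/continuity claim, your proof is complete.
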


We need, in fact, an integrated form  of the formula (\ref{LIMIT1}) valid for functionals of excursions. For this aim, let
\begin{align*}
&C^*:=\big\{f: [0,\infty)\mapsto \R, \ {\text{continuous, and}}\
 \exists\  \zeta(f)<\infty\\
&\hskip4cm
\ {\text{such that}}\ f(t)=f(\zeta(f))\ {\text{for\ all}}\  t\geq\zeta(f) \big\}.
\end{align*}
In particular, notice that $U^y\subset C^*$ for all $y\in I$.
The next proposition can be proved from Proposition \ref{LIMIT} via monotone class-arguments.
\begin{proposition}
\label{LIMITX}
Let $F$ be a measurable, bounded and non-negative functional defined in $C^*$. Then
\begin{align}
\label{LIMIT12}
\nonumber
&\int_{U^y_-}F(e)n^y(de)=\lim_{\epsilon\to 0+}\frac{1}{S(y)-S(y-\epsilon)}\, {\E_{y-\epsilon}\lp F(X^y)\rp},
\end{align}
where ${U^y_-}\subset U^y $ denotes the excursions below $y$.
%, i.e.,
%\begin{align}
%\label{belowy}
%\nonumber
%&\hskip-.5cm U_-^y:=\{e\in C^y: \exists\  \zeta(e)<\infty \ {\text{such that}}\ e(t)<y\  {\text{for\ all}}\  0<t< \zeta(e)\\
%&\hskip5cm {\text{and}}\ e(t)=y\ {\text{for\ all}}\  t\geq\zeta(e) \}.
%\end{align}
\end{proposition}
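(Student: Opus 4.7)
The strategy is a monotone-class extension of Proposition \ref{LIMIT}. First I would verify the identity for cylinder functionals of the form $F(e)=g_1(e(t_1))\cdots g_n(e(t_n))$, with $0<t_1<\cdots<t_n$ and each $g_i\geq 0$ bounded Borel on $(l,y)$. By Theorem \ref{ITO}, the left-hand side equals the iterated integral $\int g_1(x_1)\cdots g_n(x_n)\,\eta_{t_1}(dx_1)\,Q^y_{t_2-t_1}(x_1,dx_2)\cdots Q^y_{t_n-t_{n-1}}(x_{n-1},dx_n)$, while the Markov property of $X^y$ under $\P_{y-\epsilon}$ expresses $\E_{y-\epsilon}(F(X^y))$ as the same iterated integral with the leading factor replaced by $Q^y_{t_1}(y-\epsilon,dx_1)$. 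Dividing by $S(y)-S(y-\epsilon)$ and invoking Proposition \ref{LIMIT} (with bounded convergence on the remaining factors) yields the identity for such cylinder $F$.

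Next I would extend by a monotone-class argument. The cylinder events $\{e(t_1)\in A_1,\dots,e(t_n)\in A_n\}$ form a $\pi$-system generating $\cU^y$, so Dynkin's lemma applied to indicators, followed by linearity and monotone convergence, propagates the identity to all bounded non-negative $\cU^y$-measurable $F$. Because $n^y$ is only $\sigma$-finite, I would first localize to $\{\zeta(e)>T\}$ for fixed $T>0$, on which $n^y$ has finite mass and the monotone-class step runs without caveat, and then let $T\downarrow 0$ by monotone convergence. Since $X^y$ started at $y-\epsilon$ stays strictly below $y$ up to $H_y$, the quantity $\E_{y-\epsilon}(F(X^y))$ only depends on the restriction of $F$ to $U^y_-$, which matches the range of integration on the left.

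The most delicate point is the exchange of $\lim_{\epsilon\to 0+}$ with the monotone approximation $F_m\uparrow F$ by cylinder functionals. One direction is free: the cylinder case gives $\lim_{\epsilon\to 0+}(S(y)-S(y-\epsilon))^{-1}\E_{y-\epsilon}(F_m(X^y))=\int F_m\,dn^y$, and since $F_m\leq F$ one has $\liminf_{\epsilon\to 0+}(S(y)-S(y-\epsilon))^{-1}\E_{y-\epsilon}(F(X^y))\geq \int F_m\,dn^y$; letting $m\to\infty$ yields the required lower bound. The reverse inequality is handled on $\{\zeta(e)>T\}$ by approximating $F$ from above by cylinder functions and using boundedness of $F$ together with dominated convergence; combined with $T\downarrow 0$ this completes the proof, with the convention that both sides may take the value $+\infty$.
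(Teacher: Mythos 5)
Your proposal is correct and follows exactly the route the paper itself indicates: it proves the cylinder case from Theorem \ref{ITO} and Proposition \ref{LIMIT}, then extends by a monotone-class argument, which is precisely the one-line justification the paper gives for Proposition \ref{LIMITX}. Your additional care with the $\sigma$-finiteness of $n^y$ (localizing to $\{\zeta(e)>T\}$) and with the interchange of $\lim_{\epsilon\to 0+}$ and the monotone approximation supplies details the paper omits, and nothing in it is flawed.
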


\subsection{Excursions below the maxima}\label{sec4b}

%We start with by introducing the excursion space to be used when defining the excursion process below the %maxima. To this aim, define
%\begin{align*}
%&\hskip-.5cm U:=\{u\in C: \exists\  \rho(u)<\infty \ {\text{such that}}\ u(t)\not =0\  {\text{for\ all}}\  %0<t< \rho(f)\\
%&\hskip7cm {\text{and}}\ u(t)=0\ {\text{for\ all}}\  t\geq\rho(f) \}.
%\end{align*}
%We now to define the excursion process associated with the excursions below the maxima of $X$.
We proceed to study the excursion process  below the maxima (cf. also Fitzsimmons \cite{fitzsimmons13} and Pitman and Yor \cite{pitmanyor03a}).   Assume that $X_0=x$ and define for $y>x$ the first passage time over the level $y$ via
\begin{equation}\label{Hit+}
   H_{y+}:=\inf\{t\geq 0\, :\,X_t>y\}.
\end{equation}
%The set of the starting states of the excursions below the maxima are given by
%\begin{equation}\label{G}
  % G:=\{y>x\, :\,H_{y}<H_{y+}\},
%\end{equation}In case
 The excursions below the maxima are defined as follows. For $y>x$ assume first that $H_{y+}-H_y>0$ then we put
\begin{equation}\label{EXC}
\xi^y(t):=
\begin{cases}
X_{H_y+t}, &\quad 0\leq t\leq H_{y+}-H_y \\
y,  &\quad t>  H_{y+}-H_y.
%\xi^y=\lb \xi^y(u):=X_{H_y+u}, \quad 0\leq u\leq H_{y+}-H_y\rb.
\end{cases}
\end{equation}
In case  $H_{y+}-H_y=0$ we define $\xi^y(t):=y$ for all $t\geq 0$. Notice that $\xi^y\in U_-^y$. % with $U_-^y$ as defined in (\ref{belowy}).
%\begin{align*}
%&\hskip-.5cm U_-^y:=\{e\in C^y: \exists\  \zeta(e)<\infty \ {\text{such that}}\ e(t)<y\  {\text{for\ all}}\  0<t< \zeta(e)\\
%&\hskip7cm {\text{and}}\ e(t)=y\ {\text{for\ all}}\  t\geq\zeta(e) \}.
%\end{align*}
%In case $H_y =H_{y+}$
%we define $\xi^y=\partial$ where $\partial$ is a "fictitious" element (not in $U_-$).
 The process $\Xi_x=((y,\xi^y)_{y\geq x})$ is a Poisson point process taking values in $[x,r)\times U_-^y$.  For a definition of a point process, see, e.g. section XII 1 in \cite{revuzyor01}. For Poisson point processes $\Xi_x$ we have the master formula (see \cite{fitzsimmons13})
\begin{align}
&\label{master}
\hskip-1cm
 \E_x\Big(\sum_{x\le y}Z_{H_y}F(\xi^y){\bf 1}_{\{H_y<+\infty\}}\Big)\\
 &\hskip3cm
 \nonumber
 =\E_x\Big(\int_x^\infty Z_{H_y} n^{y}_-(F){\bf 1}_{\{H_y<+\infty\}} dS(y)\Big),
\end{align}
where
$F$ is  a measurable non-negative  functional defined on $ U_-^y$ such that $F(e^y)=0$ for constant function $e^y$, $(Z_t)_{t\geq 0}$ is a non-negative and progressively measurable process
and $n^{y}_-$ is the restriction of $n^y$ on the set of excursions below $y$. From  (\ref{master}) it is seen that the intensity measure of $\Xi_x$ is given by $(dy,de)\mapsto dS(y)\, n^y_-(de)$ for $y\geq x$ and $e\in U_-^y$.

To specialize the master formula (\ref{master}) to our particular case involving $\theta_\delta$ and $M_{\theta_\delta}$ we recall (cf. (\ref{theta2})) that a.s. for all $y$ and $\delta $
\begin{equation}
\label{SETX}
\{y\leq M_{\theta_\delta}\}= \{H_y\leq \theta_\delta\}.
\end{equation}
Moreover,
\begin{equation}
\label{SETX1}
 \{H_y\leq \theta_\delta\}= \{H_y< \theta_\delta\}.
\end{equation}
To see (\ref{SETX1}) notice that if $H_y= \theta_\delta$ then also $X_{H_y}= X_{\theta_\delta}$ and $M_{H_y}= M_{\theta_\delta}$ which leads to a contradiction.  Letting $Z$ be a non-negative and progressively measurable process  then,  since $\theta_\delta$ is a stopping time, also the process defined by
\begin{equation}
\nonumber
t\mapsto Z'_t:=Z_t\,{\bf 1}_{\{t<\theta_\delta\}}1_{\{M_t>\de +l\}}
\end{equation}
is non-negative and progressively measurable. Since 
\[Z'_{H_y}=Z_{H_y} \,{\bf 1}_{\{y\leq M_{\theta_\delta}\}}1_{\{y>\de +l\}}\]
we obtain from \eqref{master} when applied for $Z'_{H_y}$ the identity 
\begin{align}
&
\hskip-1cm\label{master2}
 \E_x\Big(\sum_{{x\vee(\de+l)}< y}Z_{H_y}F(\xi^y){\bf 1}_{\{y\leq M_{\theta_\delta},
H_y<+\infty\}}\Big)\\
&\hskip3cm
\nonumber
=\E_x\Big(\int_{x\vee(\de+l)}^\infty Z_{H_y} n^{y}_-(F)
{\bf 1}_{\{y\leq M_{\theta_\delta},
H_y<+\infty\}} dS(y)\Big),
\end{align}
which is the key to the proofs of Theorem \ref{prop_L} and \ref{SaVa0}.

\subsection{Two crucial formulas}\label{sec33}

In the next lemma, the  functions $b_\alpha$ and $c_\alpha$, cf. (\ref{B}) and (\ref{C}) are expressed   in terms of the excursion law $n^y$, cf. Fitzsimmons \cite{fitzsimmons22}. These  expressions are used in the  next section.

\begin{lemma}
\label{lemma34}
It holds for $\delta>0$ and $y>\de+l$
\begin{align}
\label{FEB}
%\nonumber
        &n^y_-\lp 1-\e^{-\alpha \zeta(e)}\,{\bf 1}_{\{ H_{y-\delta}(e)=\infty\}}\rp =
\begin{cases}
b_\alpha(y;\delta)& \alpha>0,\\
(S(y)-S(y-\de))^{-1},& \alpha=0,\\
\end{cases}
\end{align}
             % =\frac{\varphi_\alpha(y-\delta)\psi^-_\alpha(y)-\varphi^-_\alpha(y)\psi_\alpha(y-\delta)}
    %{\varphi_\alpha(y-\delta)\psi_\alpha(y)-\varphi_\alpha(y)\psi_\alpha(y-\delta)}
   % -\frac 1{S(y)-S(y-\delta)}
       %    \\
% &\hskip5.4cm =b_\alpha(y;\delta)
%       -\frac 1{S(y)-S(y-\delta)},

%\begin{align}
%\label{FEBO}
%\nonumber
     %   &n^y\lp 1-\e^{-\alpha \zeta}\,;\, H_{y-\delta}>\zeta\rp\\
 %       \nonumber
    %    &\hskip2cm
       % =\frac{\varphi_\alpha(y-\delta)\psi^-_\alpha(y)-\varphi^-_\alpha(y)\psi_\alpha(y-\delta)}
%    {\varphi_\alpha(y-\delta)\psi_\alpha(y)-\varphi_\alpha(y)\psi_\alpha(y-\delta)}
   % -\frac 1{S(y)-S(y-\delta)}
      %  \\
%    &\hskip2cm =b_\alpha(y;\delta)
   %    -\frac 1{S(y)-S(y-\delta)},
%\end{align}
and
\begin{align}
\label{FEC}
%\nonumber
&    n^y_-\lp \e^{-\alpha H_{y-\delta}(e)}\,;\, H_{y-\delta}(e)<\infty\rp
=
\begin{cases}
c_\alpha(y;\delta),& \alpha>0,\\
(S(y)-S(y-\de))^{-1},& \alpha=0,\\
\end{cases}
\end{align}
where $H_{y-\delta}(e)$ denotes the first hitting time of ${y-\delta}$ for a generic excursion~$e$.
\end{lemma}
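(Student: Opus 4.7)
The plan is to apply Proposition \ref{LIMITX} to the two functionals
\[F_1(e):=1-\e^{-\alpha \zeta(e)}\indic_{\{H_{y-\delta}(e)=\infty\}},\qquad F_2(e):=\e^{-\alpha H_{y-\delta}(e)}\indic_{\{H_{y-\delta}(e)<\infty\}}.\]
Under the correspondence in that proposition, a generic excursion below $y$ is replaced by the diffusion $X^y$ (killed at $H_y$) started from $y-\epsilon$, so that $\zeta(e)$ becomes $H_y$ and $\{H_{y-\delta}(e)=\infty\}$ corresponds to $\{H_y<H_{y-\delta}\}$: the killed path fails to reach $y-\delta$ exactly when it hits $y$ first. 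One is therefore reduced to computing
\[n^y_-(F_1)=\lim_{\epsilon\downarrow 0}\frac{1-\E_{y-\epsilon}\lp \e^{-\alpha H_y};\,H_y<H_{y-\delta}\rp}{S(y)-S(y-\epsilon)}\]
and the corresponding expression with $\E_{y-\epsilon}\lp\e^{-\alpha H_{y-\delta}};\,H_{y-\delta}<H_y\rp$ in the numerator for $n^y_-(F_2)$.

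The next step would be to invoke the classical two-sided boundary Laplace transforms for a diffusion confined to $(y-\delta,y)$, explicit in terms of $\psi_\alpha$ and $\varphi_\alpha$:
\[\E_{y-\epsilon}\lp \e^{-\alpha H_y};\,H_y<H_{y-\delta}\rp=\frac{\varphi_\alpha(y-\delta)\psi_\alpha(y-\epsilon)-\psi_\alpha(y-\delta)\varphi_\alpha(y-\epsilon)}{\varphi_\alpha(y-\delta)\psi_\alpha(y)-\psi_\alpha(y-\delta)\varphi_\alpha(y)},\]
with an analogous formula (numerator $\varphi_\alpha(y-\epsilon)\psi_\alpha(y)-\psi_\alpha(y-\epsilon)\varphi_\alpha(y)$, same denominator) for exit at $y-\delta$. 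Since the common denominator already matches the denominator of $b_\alpha$ and $c_\alpha$, the identification will follow from a first-order expansion of the numerators in $\epsilon$, using $\psi_\alpha(y)-\psi_\alpha(y-\epsilon)=\psi^-_\alpha(y)\lp S(y)-S(y-\epsilon)\rp+o(\epsilon)$ and the analogous formula for $\varphi_\alpha$. For $F_1$ this produces the numerator $\lp S(y)-S(y-\epsilon)\rp\lp\varphi_\alpha(y-\delta)\psi^-_\alpha(y)-\psi_\alpha(y-\delta)\varphi^-_\alpha(y)\rp+o(\epsilon)$, which after division by $S(y)-S(y-\epsilon)$ and $\epsilon\downarrow 0$ yields precisely $b_\alpha(y;\delta)$. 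For $F_2$ the same expansion produces the coefficient $\psi^-_\alpha(y)\varphi_\alpha(y)-\varphi^-_\alpha(y)\psi_\alpha(y)=w_\alpha$, delivering $c_\alpha(y;\delta)$ in its Wronskian form.

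The case $\alpha=0$ is immediate: both functionals collapse to $\indic_{\{H_{y-\delta}(e)<\infty\}}$, and Proposition \ref{LIMITX} combined with the standard scale-function identity $\P_{y-\epsilon}(H_{y-\delta}<H_y)=(S(y)-S(y-\epsilon))/(S(y)-S(y-\delta))$ yields $(S(y)-S(y-\delta))^{-1}$ in one line. The only delicate point is the first-order expansion itself and the recognition of the Wronskian inside the $F_2$-numerator (which hinges on translating derivatives in $y$ to derivatives with respect to the scale function $S$); everything else is direct bookkeeping.
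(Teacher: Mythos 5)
Your proposal is correct and follows essentially the same route as the paper: apply Proposition \ref{LIMITX} to the two functionals, insert the two-sided exit Laplace transforms (\ref{HIT1})--(\ref{HIT2}), and pass to the limit using the scale derivatives of $\psi_\alpha$ and $\varphi_\alpha$, with the $\alpha=0$ case handled by the scale-function exit probability. The only difference is that you carry out explicitly the first-order expansion (and the identification of the Wronskian in the $c_\alpha$ numerator) that the paper leaves to the reader; your bookkeeping checks out.
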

\begin{proof}
For $0<\varepsilon<\delta$ recall the formulas, see \cite{itomckean74} p. 29, where the formulas for Brownian motion are discussed, see also Darling and Siegert \cite{darlingsiegert53},
\begin{align}
\label{HIT1}
  \E_{y-\varepsilon}\lp\e^{-\alpha H_{y-\delta}}\,;\, H_{y-\delta}<H_y\rp    =\frac{\varphi_\alpha(y-\varepsilon)\psi_\alpha(y)-\varphi_\alpha(y)\psi_\alpha(y-\varepsilon)}
    {\varphi_\alpha(y-\delta)\psi_\alpha(y)-\varphi_\alpha(y)\psi_\alpha(y-\delta)}
\end{align}
and
\begin{align}
\label{HIT2}
\nonumber
  &\hskip-1cm
  \E_{y-\varepsilon}\lp\e^{-\alpha H_{y}}\,;\, H_{y}<H_{y-\delta}\rp
  \\
  &\hskip2cm
  =\frac{\varphi_\alpha(y-\varepsilon)\psi_\alpha(y-\delta)-\varphi_\alpha(y-\delta)\psi_\alpha(y-\varepsilon)}
    {\varphi_\alpha(y)\psi_\alpha(y-\delta)-\varphi_\alpha(y-\delta)\psi_\alpha(y)},
\end{align}
where $y-\varepsilon>l$.
We prove first (\ref{FEC}). This is a  straightforward application of Proposition \ref{LIMITX}
\begin{align}
\label{FEC1}
\nonumber
   &  n^y\lp \e^{-\alpha H_{y-\delta}(e)}\,;\, H_{y-\delta}(e)<\infty\rp \\
   &\hskip 2cm
   = \lim_{\varepsilon \downarrow 0}\frac 1{S(y)-S(y-\varepsilon)}
   \E_{y-\varepsilon}\lp\e^{-\alpha H_{y-\delta}}\,;\, H_{y-\delta}<H_y\rp.
\end{align}
%\textcolor{red}{I do not understand the argument ? Fitzsimmons gave no proof, he only said that it is well-known. I suggest to give a (short) proof.}\\
The fact that the functions $\varphi_\alpha$ and $\psi_\alpha$ have the scale derivatives yields the claimed formula. The claim (\ref{FEB}) is proved using the same approach. For this, consider first
\begin{align*}
 & \E_{y-\varepsilon}\lp1-\e^{-\alpha H_{y}}\,{\bf 1}_{\{ H_{y-\delta}>H_y\}}\rp
\\
&\hskip1cm
=
1
%\frac{S(y-\varepsilon)-S(y-\delta) }
%{S(y)-S(y-\delta) }
%&\hskip3cm
-  \frac{\varphi_\alpha(y-\varepsilon)\psi_\alpha(y-\delta)-\varphi_\alpha(y-\delta)\psi_\alpha(y-\varepsilon)}
    {\varphi_\alpha(y)\psi_\alpha(y-\delta)-\varphi_\alpha(y-\delta)\psi_\alpha(y)},
\end{align*}
and applying again Proposition \ref{LIMITX} gives the formula (\ref{FEB}); we skip the details. The formulas (\ref{FEC})  and  (\ref{FEC}) in case $\alpha=0$ are obtained by applying the well known fact that, e.g.,
\begin{equation*}
 \P_{y-\varepsilon}\lp H_{y-\delta}< H_y\rp =\frac{S(y)-S(y-\varepsilon)}{S(y)-S(y-\delta) }.
\end{equation*}
\end{proof}

\subsection{Completion of the proof}
\label{sTpL}

We begin with an identity which allows us to use the formalism of the excursion theory.

\begin{lemma}\label{lT1} The following identity holds for all excursions $\xi^y,\, y>l,$ below the maxima
\begin{equation}
\label{FF0}
{\bf 1}_{\{M_{\theta_\delta}<z\}}
=\sum_{x\vee(\de+l)<y<z}{\bf 1}_{\{y\leq M_{\theta_\delta}, H_{y-\delta}(\xi^y)<\infty\}}.
\end{equation}
\end{lemma}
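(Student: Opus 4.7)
The plan is to exploit the observation that the drawdown of size $\delta$ is realized by exactly one excursion below the running maximum, namely the one at level $y=M_{\theta_\delta}$. I will show that the sum on the right-hand side has exactly one non-zero term, corresponding to this $y$, which reduces the claimed identity to an almost sure statement about the location of $M_{\theta_\delta}$ relative to $x\vee(\delta+l)$.

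First I would record two basic facts. By continuity of sample paths and the definition of $\theta_\delta$ as an infimum, $M_{\theta_\delta}-X_{\theta_\delta}=\delta$ almost surely on $\{\theta_\delta<\infty\}$, so in particular $X_{\theta_\delta}=M_{\theta_\delta}-\delta$. Secondly, the regularity of a one-dimensional diffusion at interior points forces $M_t>x$ for every $t>0$ and, combined with the constraint $X_{\theta_\delta}\geq l$ together with a similar regularity argument at level $l+\delta$ when $l>-\infty$, yields $M_{\theta_\delta}>x\vee(\delta+l)$ almost surely on $\{\theta_\delta<\infty\}$. This handles the strict inequalities at the boundary of the range of summation.

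Next I would carry out a case split on the level $y$. For $y>M_{\theta_\delta}$ the indicator $\mathbf{1}_{\{y\leq M_{\theta_\delta}\}}$ kills the term. For $y=M_{\theta_\delta}$, the excursion $\xi^y$ covers the time interval containing $\theta_\delta$, since $M$ remains equal to $y$ from $H_y$ until strictly after $\theta_\delta$; because $X_{\theta_\delta}=y-\delta$ the excursion hits $y-\delta$, so $H_{y-\delta}(\xi^y)<\infty$. For $x\vee(\delta+l)<y<M_{\theta_\delta}$ the excursion $\xi^y$ terminates at $H_{y+}\leq H_{M_{\theta_\delta}}\leq\theta_\delta$; were $\xi^y$ to reach $y-\delta$, regularity of the diffusion at the interior point $y-\delta>l$ would force $\xi^y$ to drop strictly below $y-\delta$, producing a drawdown strictly exceeding $\delta$ before $\theta_\delta$, contradicting the definition of $\theta_\delta$. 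Hence $H_{y-\delta}(\xi^y)=\infty$ for these $y$.

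Combining these three cases, the right-hand side equals $\mathbf{1}_{\{x\vee(\delta+l)<M_{\theta_\delta}<z\}}$, which by the first step coincides almost surely with $\mathbf{1}_{\{M_{\theta_\delta}<z\}}$. The main technical obstacle is the regularity-based step: one must rule out the scenario in which $\xi^y$ only tangentially touches $y-\delta$ without crossing strictly below, and this is precisely where the hypothesis $y>\delta+l$, which ensures that $y-\delta$ is an interior point of $I$, and the regularity of one-dimensional diffusions at interior points are essential.
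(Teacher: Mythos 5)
Your proof is correct and follows essentially the same route as the paper's: both arguments rest on the observation that exactly one term of the sum is non-zero, namely the one at $y=M_{\theta_\delta}$, obtained by the same three-way case split on $y$ relative to $M_{\theta_\delta}$. Your additional care with the boundary fact $M_{\theta_\delta}>x\vee(\delta+l)$ and with ruling out a tangential touch of $y-\delta$ via regularity only makes explicit points the paper leaves implicit.
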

\begin{proof} %{\textcolor{red}{Have tried to write something for a proof -- can be edited, of course}}
We can take  $z>{x\vee(\de+l)}$.  Assume that $M_{\theta_\delta}=y^*$. Then, if $y^*\geq z$ both sides of (\ref{FF0}) equal 0. For  $y^*< z$ the indicator on the right hand side is 0 for all $y>y^*$. Also for $y<y^*$  the indicator is 0 since for such $y$ the excursion $\xi^y$  does not hit $y-\delta$. For $y=y^*$ the indicator is 1, and (\ref{FF0}) holds also for $y^*< z$.
\end{proof}

\begin{lemma}\label{lT10} For $\alpha\geq 0$
\begin{equation}
\label{LAMB1}
\E_x\Big(\,\e^{-\alpha \theta_\delta}\,{\bf 1}_{\{M_{\theta_\delta}<z\}}
\Big)=\int_{x\vee(\de+l)}^z \lambda(y;x) n^y_-\big(\
\e^{-\alpha H_{y-\delta}}\,
 {\bf 1}_{\{H_{y-\delta}<\infty\}} \big) dS(y)
\end{equation}
where for $y>{x\vee(\de+l)}$%$e$ denotes a generic excursion, $H_u(e)=\inf\{t\geq 0, e(t)=u\}$, $\zeta(e)$ is the life time of $e$ and for $y\geq x$
\begin{equation}
\label{LAMBDA}
\lambda(y;x):=\E_x\Big( \e^{-\alpha H_y}\, {\bf 1}_{\{y\leq M_{\theta_\delta}\}} \Big).
\end{equation}
\end{lemma}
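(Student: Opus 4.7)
The approach is to apply the master formula \eqref{master2} to the pointwise identity \eqref{FF0} of Lemma \ref{lT1}, after multiplying through by $\e^{-\alpha\theta_\delta}$.

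The key observation is that the indicator $\indic_{\{y\leq M_{\theta_\delta},\,H_{y-\delta}(\xi^y)<\infty\}}$ is nonzero for at most one value of $y$, namely $y=M_{\theta_\delta}$: for $y<M_{\theta_\delta}$ the excursion $\xi^y$ has finite duration, but if it hit $y-\delta$ then the drawdown of size $\delta$ would be realized during $\xi^y$, forcing $\theta_\delta\leq H_{y+}$ and hence $M_{\theta_\delta}\leq y$, a contradiction; for $y>M_{\theta_\delta}$ the first factor of the indicator vanishes. At the distinguished level $y=M_{\theta_\delta}$ the drawdown occurs during $\xi^y$, so
\[
\theta_\delta=H_y+H_{y-\delta}(\xi^y),
\]
and therefore, everywhere on the set where the indicator equals $1$,
\[
\e^{-\alpha\theta_\delta}\,\indic_{\{y\leq M_{\theta_\delta},\,H_{y-\delta}(\xi^y)<\infty\}}
=\e^{-\alpha H_y}\,\e^{-\alpha H_{y-\delta}(\xi^y)}\,\indic_{\{y\leq M_{\theta_\delta},\,H_{y-\delta}(\xi^y)<\infty\}}.
\]

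Multiplying \eqref{FF0} by $\e^{-\alpha\theta_\delta}$, inserting this identity termwise, and taking $\E_x$ of both sides produces
\[
\E_x\big(\e^{-\alpha\theta_\delta}\,\indic_{\{M_{\theta_\delta}<z\}}\big)
=\E_x\Big(\sum_{x\vee(\de+l)<y<z}\e^{-\alpha H_y}\,F_y(\xi^y)\,\indic_{\{y\leq M_{\theta_\delta}\}}\Big),
\]
where $F_y(e):=\e^{-\alpha H_{y-\delta}(e)}\indic_{\{H_{y-\delta}(e)<\infty\}}$ is a measurable non-negative excursion functional. Applying the master formula \eqref{master2} with $Z_t=\e^{-\alpha t}$ and the level-dependent functional $F_y$ (restricted to $y<z$), and then invoking Tonelli on the resulting non-negative integrand, rewrites the right-hand side as
\[
\int_{x\vee(\de+l)}^z \E_x\big(\e^{-\alpha H_y}\,\indic_{\{y\leq M_{\theta_\delta}\}}\big)\, n^y_-\!\big(\e^{-\alpha H_{y-\delta}}\,\indic_{\{H_{y-\delta}<\infty\}}\big)\,dS(y),
\]
which is exactly \eqref{LAMB1} in view of the definition \eqref{LAMBDA} of $\lambda(y;x)$.

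The main obstacle is the path-level identity $\theta_\delta=H_y+H_{y-\delta}(\xi^y)$ at $y=M_{\theta_\delta}$: one must argue carefully that the first drawdown of size $\delta$ is necessarily realized during the excursion below the terminal maximum, handling in particular the cases $l>-\infty$ and $\theta_\delta=+\infty$ (where the latter is captured by the convention $\e^{-\infty}=0$ from Remark \ref{rmk0}). A secondary technical point is that \eqref{master2} is stated for a single excursion functional $F$ but is applied here with a $y$-dependent $F_y$; this extension is routine via monotone-class arguments, since the Poisson point process $\Xi_x$ has intensity $dS(y)\,n^y_-(de)$.
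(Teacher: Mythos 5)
Your proposal is correct and follows essentially the same route as the paper's proof: multiply the identity (\ref{FF0}) of Lemma \ref{lT1} by $\e^{-\alpha\theta_\delta}$, use $\theta_\delta=H_y+H_{y-\delta}(\xi^y)$ on the event $\{M_{\theta_\delta}=y\}$ to factor the exponential, and apply the master formula (\ref{master2}) with $Z_t=\e^{-\alpha t}$ and $F(\xi^y)=\e^{-\alpha H_{y-\delta}(\xi^y)}\indic_{\{H_{y-\delta}(\xi^y)<\infty\}}$. The observations you flag as obstacles (the path-level decomposition of $\theta_\delta$ and the $y$-dependence of the excursion functional) are handled in the paper exactly as you indicate, via (\ref{SETX}) and the direct application of (\ref{master2}).
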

%\end{document}
\begin{proof} Using (\ref{FF0})
\begin{align}
\label{FF1X}
\nonumber
\hskip-.5cm
\e^{-\alpha \theta_\delta}\,{\bf 1}_{\{M_{\theta_\delta}<z\}}
&
=\sum_{x\vee(\de+l)<y<z}\e^{-\alpha \theta_\delta}\,{\bf 1}_{\{y\leq M_{\theta_\delta}, H_{y-\delta}(\xi^y)<\infty\}}\\
&
\nonumber
=\sum_{x\vee(\de+l)<y<z}\e^{-\alpha \theta_\delta}\,{\bf 1}_{\{H_y\leq {\theta_\delta}, H_{y-\delta}(\xi^y)<\infty\}}\\
&
%\nonumber
=\sum_{x\vee(\de+l)<y<z}\e^{-\alpha H_y}\,{\bf 1}_{\{H_y\leq {\theta_\delta}\}}\, \e^{-\alpha H_{y-\delta}(\xi^y)}\,{\bf 1}_{\{H_{y-\delta}(\xi^y)<\infty\}},
\end{align}
where in the second step we have applied (\ref{SETX})  and for the third step notice that
\begin{equation}
\label{FF12}
{\th_\de}=H_y+ H_{y-\delta}(\xi^y),
\end{equation}
when $M_{\theta_\delta}=y$. Hence we conclude
\begin{align}
\label{FF2X}
\nonumber
\hskip-.5cm
\e^{-\alpha \theta_\delta}\,{\bf 1}_{\{M_{\theta_\delta}<z\}}
&=\sum_{x\vee(\de+l)<y<z}\e^{-\alpha H_y}\,{\bf 1}_{\{y\leq M_{\theta_\delta},H_y< \infty\}}\, \e^{-\alpha H_{y-\delta}(\xi^y)}\,{\bf 1}_{\{H_{y-\delta}(\xi^y)<\infty\}},
\end{align}
and can then apply directly  (\ref{master2}) to prove the claim.
\end{proof}

We proceed now with the calculation of the function $\lambda$ as defined in (\ref{LAMBDA}).
\begin{lemma}
\label{lT2}
For $y>x\vee(\de+l)$
%\begin{equation}\label{LAMBDA1}
%\lambda(y)=\frac{\psi_\alpha(x)}{\psi_\alpha(x\vee(\de+l))}
%\exp\lp-\frac{\psi_\alpha(x)}{\psi_\alpha(x\vee(\de+l))}\int_{x\vee(\de+l)}^y b_\alpha(z;\de) \,dS(z)\rp,
%\end{equation}

\begin{equation}\label{LAMBDA1}
\lambda(y;x)=\frac{\psi_\alpha(x)}{\psi_\alpha(x\vee(\de+l))}
\exp\lp-\int_{x\vee(\de+l)}^y b_\alpha(z;\de) \,dS(z)\rp,
\end{equation}

where $b_\alpha$ is given in (\ref{B}) (see also (\ref{FEB})).
\end{lemma}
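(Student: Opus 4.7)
\textbf{Plan for the proof of Lemma \ref{lT2}.}

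The plan is to split the argument according to whether $x \geq \de + l$ or $x < \de + l$, and then to exploit the Poisson point process structure of the excursions below the maxima as described after the master formula (\ref{master}).

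First, when $l > -\infty$ and $x < \de + l$, I will observe that for every $t \leq H_{\de + l}$ one has $M_t < \de + l$ and $X_t \geq l$ (because $l$ is either unattainable or reflecting in both classes considered), so that $M_t - X_t < \de$ on $[0, H_{\de+l}]$. Consequently $\th_\de > H_{\de + l}$ almost surely, and the strong Markov property at $H_{\de + l}$ combined with (\ref{for1}) gives
\begin{equation*}
\lambda(y; x) = \E_x\lp \e^{-\alpha H_{\de + l}} \rp \, \lambda(y; \de + l) = \frac{\psi_\alpha(x)}{\psi_\alpha(\de + l)} \, \lambda(y; \de + l).
\end{equation*}
This reduces the claim to proving $\lambda(y; x) = \exp\lp -\int_x^y b_\alpha(z; \de)\, dS(z) \rp$ for $x \geq \de + l$.

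For such $x$ (where $H_x = 0$) I will use two facts from the excursion formalism. First, the hitting time decomposes across levels as $H_y = \sum_{x < z \leq y} \zeta(\xi^z)$, the sum running over the at most countable set of levels at which a non-trivial excursion below the maxima occurs. Second, by (\ref{theta2}) and the monotonicity of $M$, the event $\{y \leq M_{\th_\de}\}$ coincides with the event that \emph{no} excursion $\xi^z$ with $z \in (x, y]$ reaches depth $\de$, i.e.\ $H_{z-\de}(\xi^z) = \infty$ for every such $z$. Combining these,
\begin{equation*}
\e^{-\alpha H_y} \mathbf{1}_{\{y \leq M_{\th_\de}\}} = \prod_{x < z \leq y} \e^{-\alpha \zeta(\xi^z)} \mathbf{1}_{\{H_{z - \de}(\xi^z) = \infty\}}.
\end{equation*}
Applying the exponential (Campbell) form of the master formula to the Poisson point process $\Xi_x$ of intensity $dS(z) \otimes n^z_-(de)$ then yields
\begin{equation*}
\lambda(y; x) = \exp\lp - \int_x^y dS(z) \int_{U^z_-} \lp 1 - \e^{-\alpha \zeta(e)} \mathbf{1}_{\{H_{z - \de}(e) = \infty\}} \rp n^z_-(de) \rp,
\end{equation*}
and formula (\ref{FEB}) of Lemma \ref{lemma34} identifies the inner integral as $b_\alpha(z; \de)$, completing the argument.

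The main obstacle I anticipate is the passage from the linear master formula (\ref{master}) to the exponential form invoked above: although $g(z, e) := \e^{-\alpha \zeta(e)} \mathbf{1}_{\{H_{z-\de}(e) = \infty\}}$ is bounded in $[0,1]$, its ``bad factor'' makes the integrand $1-g$ jump in a way one must handle carefully. The cleanest workaround is to avoid the exponential formula altogether: the strong Markov property at $H_{y_1}$ (on $\{y_1 \leq M_{\th_\de}\}$) gives the multiplicative identity $\lambda(y_2; x) = \lambda(y_1; x)\, \lambda(y_2; y_1)$ for $x \leq y_1 \leq y_2$; combined with the infinitesimal asymptotics $\lambda(y + h; y) = 1 - b_\alpha(y; \de)\, (S(y+h)-S(y)) + o(h)$ (obtained by applying the linear master formula (\ref{master2}) to a vanishing interval and invoking (\ref{FEB})), this yields the Stieltjes ODE $d\lambda(y;x)/dS(y) = -b_\alpha(y;\de)\, \lambda(y;x)$ with initial value $\lambda(x;x) = 1$, whose solution is the claimed exponential.
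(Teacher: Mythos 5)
Your argument is correct, and it shares the essential ingredients of the paper's proof: the reduction of the case $x<\de+l$ via the strong Markov property at $H_{\de+l}$ (producing the prefactor $\psi_\alpha(x)/\psi_\alpha(x\vee(\de+l))$), the representation of $\e^{-\alpha H_y}\,\mathbf{1}_{\{y\le M_{\th_\de}\}}$ as a product over the point process of excursions below the maxima, and the identification of the excursion-law integral with $b_\alpha$ through (\ref{FEB}). Where you genuinely diverge is in the passage from the product to the exponential. Your primary route invokes the exponential (Campbell) formula for the Poisson point process $\Xi_x$, which uses the full independence structure asserted after (\ref{master}); the paper deliberately stays with the first-moment master formula only, converting $1-\prod_k a_k$ into $\sum_k\bigl(\prod_{i<k}a_i\bigr)(1-a_k)$ (identity (\ref{X4})), applying (\ref{master2}) to the resulting sum, and thereby arriving at the Volterra-type integral equation (\ref{X91A}) whose unique solution is the claimed exponential. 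Your ``workaround'' is in substance the paper's proof localized to a vanishing interval: to obtain the asymptotics $\lambda(y+h;y)=1-b_\alpha(y;\de)\,(S(y+h)-S(y))+o(h)$ from the linear master formula you still need the same telescoping device (or a two-sided bound such as $\bigl(\prod g\bigr)\sum(1-g)\le 1-\prod g\le\sum(1-g)$) to turn $1-\E\bigl(\prod\bigr)$ into the expectation of a sum to which (\ref{master2}) applies, so that step should be made explicit. What the exponential-formula route buys is brevity; what the paper's route buys is that only the compensator identity is ever used, never the Poisson property itself.
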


\begin{proof}
Let $y>x\vee(\de+l)$ and recall that $X_0=x$. Note first% {\textcolor{red}{have changed here $y<M_{\theta_\delta}$ to $y\leq M_{\theta_\delta}$ and so on -- ok ?}}
\begin{equation}
\label{X1}
\nonumber
{\bf 1}_{\{y\leq M_{\theta_\delta}\}}=\prod_{x\vee(\de+l)<z<y}{\bf 1}_{\{H_{z-\delta}(\xi^z)=\infty\}},
%\prod_{x<z\leq y}{\bf 1}_{\{0<\zeta(\xi^z)<H_{z-\delta}(\xi^z)\}},
\end{equation}
and
\begin{equation}
\label{X2}
\nonumber
H_y= H_{x\vee(\de+l)}+\sum_{x\vee(\de+l)<z<y}\lp H_{z+}-H_z \rp.
\end{equation}
Then
\begin{equation}
\label{X3}
%\nonumber
\e^{-\alpha H_y}\,{\bf 1}_{\{y\leq M_{\theta_\delta}\}}=e^{-\alpha H_{x\vee(\de+l)}}
\prod_{x\vee(\de+l)<z< y}\e^{-\alpha\lp H_{z+}-H_z\rp}{\bf 1}_{\{H_{z-\delta}(\xi^z)=\infty\}}.
\end{equation}
Recall the following elementary fact: given a sequence $a_i, i=1,2,\cdots,$ such that $a_i\in[0,1] $ for all $i$ then with $a_0:=1$
\begin{equation}
\label{X4}
1-\prod_{k=1}^\infty a_k=\sum_{k=1}^\infty\lp\prod_{i=0}^{k-1}a_i\rp(1-a_k).
\end{equation}
We apply (\ref{X4}) for the countable product in (\ref{X3}) to obtain
\begin{align}
\label{X5}
&
\nonumber
\hskip-1cm
A:=e^{-\alpha H_{x\vee(\de+l)}}-\e^{-\alpha H_y}\,{\bf 1}_{\{y\leq M_{\theta_\delta}\}}\\
&\hskip-.5cm
%\nonumber
=e^{-\alpha H_{x\vee(\de+l)}}\sum_{x\vee(\de+l)<z<y}\lp\prod_{x\vee(\de+l)<u<z}\e^{-\alpha\lp H_{u+}-H_u\rp}{\bf 1}_{\{H_{z-\delta}(\xi^z)=\infty\}}\rp\\
&
\nonumber
\hskip4cm\times \lp 1-\e^{-\alpha\lp H_{z+}-H_z\rp}{\bf 1}_{\{H_{z-\delta}(\xi^z)=\infty\}}\rp.
\end{align}
Using (\ref{X3}) for the product in (\ref{X5}) yields
\begin{align}
\label{X6}
&
%\nonumber
A=e^{-\alpha H_{x\vee(\de+l)}}\sum_{x\vee(\de+l)<z<y}\e^{-\alpha H_z}\,{\bf 1}_{\{z\leq M_{\theta_\delta}\}}
%\\
%&
%\nonumber\hskip3cm\times
\lp 1-\e^{-\alpha\lp H_{z+}-H_z\rp}{\bf 1}_{\{H_{z-\delta}(\xi^z)=\infty\}}\rp.
\end{align}
Applying (\ref{master2}) gives
\begin{align*}
&
\E_x\lp A\rp=\int_{x\vee(\de+l)}^y\E_x\lp e^{-\alpha H_{x\vee(\de+l)}}\e^{-\alpha H_z}\,{\bf 1}_{\{z\leq M_{\theta_\delta}\}}\rp
\\
&\hskip5cm
\times
n^{z}_-\lp 1-\e^{-\alpha\zeta(\xi^z)}{\bf 1}_{\{H_{z-\delta}(\xi^z)=\infty}\rp\,dS(z)\\
&\hskip1cm
=\E_x\Big(e^{-\alpha H_{x\vee(\de+l)}}\Big)\int_{x\vee(\de+l)}^ y\E_{x\vee(\de+l)}\lp \e^{-\alpha H_z}\,{\bf 1}_{\{z\leq M_{\theta_\delta}\}}\rp
b_\alpha(z;\de)\,dS(z),
\end{align*}
where Lemma \ref{lemma34} and the strong Markov property at time
$H_{x\vee(\de+l)}$ are used. 
Recall (cf. (\ref{for1})) also that
\begin{equation*}
\lambda\big(x\vee(\de+l);x\big)= \E_x\left(\exp(-\alpha H_{x\vee(\de+l)})\right) =\frac{\psi_\alpha(x)}{\psi_\alpha(x\vee(\de+l))}.
 \end{equation*}
 Therefore, since
 \[\E_x(A)=\lambda\big(x\vee(\delta+l);x\big)-\lambda(y;x),\]
 it follows that the function $\lambda$ satisfies
\begin{align}
\label{X91}
&\nonumber
\lambda\big(x\vee(\de+l);x\big)-\lambda(y;x)=\lambda\big(x\vee(\de+l);x\big)
\\
&\hskip4cm
\times\int_{x\vee(\de+l)}^ yb_\alpha(z;\de) \,\lambda(z;x\vee(\de+l))\,dS(z).
\end{align}
Assume first that $x\geq \delta+l$. Then (\ref{X91}) takes the form  
\begin{align}
\label{X91A}
&%\nonumber
1-\lambda(y;x)=\int_{x}^ yb_\alpha(z;\de) \,\lambda(z;x)\,dS(z),
\end{align}
and then
\begin{align}
\label{X91Y}
&%\nonumber
\lambda(y;x)=\exp\lp-\int_{x}^y b_\alpha(z;\de) \,dS(z)\rp.
\end{align}
When $x\leq \delta+l$ we have
\begin{align}
\label{X91X}
&%\nonumber
\lambda\big(\de+l;x\big)-\lambda(y;x)=\lambda\big(\de+l;x\big)\int_{\de+l}^ yb_\alpha(z;\de) \,\lambda(z;\de+l)\,dS(z).
\end{align}The integral in (\ref{X91X}) has, in fact,   been evaluated in (\ref{X91A}) (put therein $x=\de+l$). Using this  yields 
 for $x\leq \delta+l$
\begin{align}
\label{X91Z}
&%\nonumber
\lambda(y;x)=\frac{\psi_\alpha(x)}{\psi_\alpha(\de+l)}
\exp\lp-\int_{\de+l}^y b_\alpha(z;\de) \,dS(z)\rp.
\end{align}
This ends the proof  of the lemma.  
\end{proof}

%\end{document}
\noindent
We finalize now the proofs of Theorems \ref{prop_L} and \ref{SaVa0}. Firstly, Lehoczky's formula (\ref{ei00}) follows  from Lemma \ref{lT10} using therein  (\ref{FEC}) and (\ref{LAMBDA1}). To find  the distribution of $M_{\theta_\delta}$ (cf.  Remark \ref{rmk0}.2), i.e., to prove (\ref{eq:ei01}), put $\alpha=0$ in (\ref{LAMB1}) to have
\begin{equation}
\label{LAMB2}
%\P_x\Big(\,\e^{-\alpha \theta_\delta}\,{\bf 1}_{\{M_{\theta_\delta}<z\}}
%\Big)
\phi(z)=\int_{x\vee(\de+l)}^z \lp 1-\phi(y)\rp n^y_-\lp H_{y-\delta}<\infty\rp dS(y)
\end{equation}
where %$e$ denotes a generic excursion, $H_u(e)=\inf\{t\geq 0, e(t)=u\}$, $\zeta(e)$ is the life time of $e$ and for $y\geq x$
\begin{equation}
\label{LAMBDA2}
\phi(z):=\P_x\lp M_{\theta_\delta}< z \rp.
\end{equation}
Notice that $\phi \big(x\vee (\delta +l)\big)=0$. Using (\ref{FEC}) and differentiating in (\ref{LAMB2}) yield for $z>l+\de$
\begin{equation}
\label{LAMBDA3}
\frac{\phi'(z)}{1-\phi(z)}=\frac{S'(z)}{S(z)-S(z-\delta)}.
\end{equation}
Integrating in  (\ref{LAMBDA3}) over the interval $(x\vee(\de+l),y)$ leads to  (\ref{eq:ei01}).  Secondly, consider the statement in Theorem \ref{SaVa0}.  Applying (\ref{DM1}) we have
\begin{equation*}
\E_x\lp\exp(-\alpha H_y); D^-_{H_y}<\delta\rp=\E_x\Big(\exp(-\alpha H_y); M_{\theta_\delta}>y\Big)=\lambda(y;x),
\end{equation*}
where $\lambda$ is as in (\ref{LAMBDA}). The claim in Theorem \ref{SaVa0} follows now from Lemma \ref{lT2}.

%\begin{theorem}
%\label{prop_new1}
%(Salminen and Vallois, 2020) For $\beta>x$ the $\P_x$-distribution of the maximum %decrease up to the first hitting time $H_\beta$ is given  by
% \begin{equation}
%\label{ei}
%\P_x(D^-_{H_\beta}<u)=
% \begin{cases}

%1,&\hskip-2cm u\geq \beta-l,\\
%&\\
%{\exp\left(-\int_{S(u+l)}^{S(\beta)}\frac{dy}{y-S(S^{-1}(y)-u)}\right),}&x-l\leq u\leq \beta-l.\\
%&\\
%{\exp\left(-\int_{S(x)}^{S(\beta)}\frac{dy}{y-S(S^{-1}(y)-u)}\right),}&0\leq u\leq x-l.\\\end{cases}
%\end{equation}

 %\begin{equation}
%\label{ei0}
%\P_x(D^-_{H_\beta}<u)=
% \begin{cases}
%1,&\hskip-2cm u\geq \beta-l,\\
%&\\
%{\exp\left(-\int_{u+l}^{\beta}\frac{S(dy)}{S(y)-S(y-u)}\right),}&x-l\leq u\leq \beta-l.\\
%&\\
%{\exp\left(-\int_{x}^{\beta}\frac{S(dy)}{S(y)-S(y-u)}\right),}&0< u\leq x-l.\\
%\end{cases}
%\end{equation}
%In case  $l=-\infty$ the two first cases in (\ref{ei0}) are absent and the third one %holds for all $u> 0$.
%\end{theorem}

%Assuming here that $l=-\infty$ it is seen that the right hand sides of  (\ref{ei0})  and %(\ref{eq:}) are "almost" the same. In fact,

\section{Analysis of the process $(M_{\theta_\de})_{\de\geq 0}$}
%$(M_{\theta_\de})_{\de\geq 0}$}
\label{sec41}

\subsection{Basic properties}
\label{sec410}
 Let the underlying diffusions $X$ be such that $\theta_\de$ is  finite a.s. for all values on $\theta$. Then, by the contnuity of the paths of $X$, both $\theta_\de$ and $ M_{\theta_\de}$ tend to $\infty$ as $\de\to\infty$.  Recall that $r=+\infty$, and to fix ideas we take $l=-\infty$.    It is assumed also that $X(0)= 0$ which implies that $M_{\theta_0}=0$.  Clearly, $M_\theta$ is a non-decreasing, right continuous, pure jump process. On every compact subinterval of $(0,\infty)$ $M_\theta$  undertakes finitely many jumps and is constant between the jumps. On any interval $[0,\ep)$ with $\ep>0$ there are countably infinitely many jumps accumulating at 0 and $M_{\th_\de}\to 0$ as $\de\to 0$.

We study first the Markov property of  $(M_{\theta_\de})_{\de\geq 0}$.%$M_{\theta}$.

\begin{proposition}
\label{j1}
For a Borel-measurable  $f:\R_+\times\R_+\mapsto\R_+$ and   $\de>\rho\geq 0$ 
\begin{equation}
\label{je01}
\E_0\left(f(M_{\theta_\de},{\theta_\de})\,|\, \cF_{\th_\rho}\right)=%\E_0\left(f(M_{\theta_\de},{\theta_\de})\,|\, M_{\th_\rho}\right)=
Q_{\rho,\de}\lp M_{\theta_\rho},{\theta_\rho}; f\rp,
\end{equation}
where
\begin{align*}
 Q_{\rho,\de}\lp y,s; f\rp&=\E_{y-\rho}\lp f(y,s+H_{y-\de})\,;\, H_{y-\de}<H_y\rp
\\
&\hskip2cm+  \E_{y-\rho}\lp F_\de(y,H_y)\,;\, H_y< H_{y-\de}\rp
\end{align*}
and for $a>0$
\begin{align*}
 F_\de(y,{a}):=\E_{y}\lp f(M_{\theta_\de},a+\th_\de)\rp.
\end{align*}
In particular, for a Borel-measurable $f:\R_\mapsto\R_+$
\begin{equation}
\label{je1}
\E_0\left(f(M_{\theta_\de})\,|\, \cF_{\th_\rho}\right)=Q_{\rho,\de}\lp M_{\theta_\rho}; f\rp,
\end{equation}
where
\begin{align}\hskip-.2cm
\label{je5}
 Q_{\rho,\de}\lp y; f\rp&:=f(y)\P_{y-\rho}\lp H_{y-\de}<H_y\rp + \E_y\lp f(M_{\theta_\de})\rp \P_{y-\rho}\lp H_y< H_{y-\de}\rp,
\end{align}
and, hence,   $(M_{\theta_\de})_{\de\geq 0}$    is a Markov process with respect to its own filtration $\cF^M_{\th}=(\cF^M_{\th_\tau})_{\tau\geq 0}:=(\sigma\{ M_{\th_u}; u\leq \tau\})_{\tau\geq 0}$.

\end{proposition}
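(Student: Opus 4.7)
The plan is to combine the strong Markov property of $X$ at two successive stopping times with a pathwise decomposition of the trajectory after $\theta_\rho$, and then to read off the Markov property of $(M_{\theta_\delta})_{\delta\ge 0}$ from the fact that the resulting conditional expectation depends on $\cF_{\theta_\rho}$ only through $(M_{\theta_\rho},\theta_\rho)$. First, by continuity of the sample paths of $X$ and the definition $\theta_\rho=\inf\{t\ge 0:M_t-X_t>\rho\}$, the drawdown equals exactly $\rho$ at time $\theta_\rho$; writing $y:=M_{\theta_\rho}$, this gives $X_{\theta_\rho}=y-\rho$. Applying the strong Markov property at $\theta_\rho$, the shifted process $\tilde X_t:=X_{\theta_\rho+t}$ is a diffusion issued from $y-\rho$, and the original running maximum satisfies $M_{\theta_\rho+t}=\max\bigl(y,\,\max_{s\le t}\tilde X_s\bigr)$.

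Next I would compare the hitting times $H_{y-\delta}(\tilde X)$ and $H_y(\tilde X)$. While $\tilde X$ stays below $y$ the original maximum remains equal to $y$ and the drawdown equals $y-\tilde X$, which first exceeds $\delta$ precisely when $\tilde X$ reaches $y-\delta$; hence on $\{H_{y-\delta}(\tilde X)<H_y(\tilde X)\}$ we obtain $\theta_\delta=\theta_\rho+H_{y-\delta}(\tilde X)$ and $M_{\theta_\delta}=y$. On the complementary event, at the stopping time $\theta_\rho+H_y(\tilde X)$ one has $X=M=y$ and the drawdown is zero, so a second application of the strong Markov property shows that the conditional law of $\bigl(M_{\theta_\delta},\theta_\delta-\theta_\rho-H_y(\tilde X)\bigr)$ coincides with the joint law of $(M_{\theta_\delta},\theta_\delta)$ under $\P_y$. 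Substituting these two descriptions into $\E_0\bigl[f(M_{\theta_\delta},\theta_\delta)\,\big|\,\cF_{\theta_\rho}\bigr]$ and treating the $\cF_{\theta_\rho}$-measurable quantity $\theta_\rho$ as a constant produces exactly $Q_{\rho,\delta}(M_{\theta_\rho},\theta_\rho;f)$ as in (\ref{je01}); specializing $f$ to be independent of its time argument yields (\ref{je1}) together with the explicit form (\ref{je5}).

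Finally, since the right-hand side of (\ref{je1}) is a deterministic function of $M_{\theta_\rho}$ alone and $M_{\theta_\rho}\in\cF^M_{\theta_\rho}\subset\cF_{\theta_\rho}$, the tower property immediately gives $\E_0[f(M_{\theta_\delta})\,|\,\cF^M_{\theta_\rho}]=Q_{\rho,\delta}(M_{\theta_\rho};f)$, which is the Markov property of $(M_{\theta_\delta})_{\delta\ge 0}$ in its natural filtration. The main technical point I foresee is bookkeeping: cleanly separating the $\cF_{\theta_\rho}$-measurable shift $\theta_\rho$ from the $\tilde X$-measurable hitting times inside the conditional expectation so that the quantities appearing under $\E_{y-\rho}$ are genuinely computed for the diffusion started from $y-\rho$; the $\P_{y-\rho}$-null set $\{H_{y-\delta}(\tilde X)=H_y(\tilde X)\}$ can be discarded by standard one-dimensional exit theory, and no other subtleties are expected.
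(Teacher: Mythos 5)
Your argument is correct and follows essentially the same route as the paper: decompose at $\theta_\rho$, use $X_{\theta_\rho}=M_{\theta_\rho}-\rho$, split according to whether the shifted process hits $y-\delta$ before returning to $y$ (the paper's events $\{U<V\}$ and $\{U>V\}$), and apply the strong Markov property, with the Markov property of $(M_{\theta_\delta})_{\delta\ge 0}$ then following from the tower property. If anything, you are slightly more explicit than the paper at the second application of the strong Markov property (at the return time to the previous maximum, where the law restarts from $y$) and you treat the joint version \eqref{je01} directly, which the paper leaves to the reader.
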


\begin{proof}
For notational simplicity, we prove (\ref{je1}) and leave  (\ref{je01}) to the reader.
For $\de>\rho\geq 0$ introduce
\begin{align*}
&
U:=\inf\{t\geq 0\,;\, X_{t+\th_\rho}\leq M_{\th_\rho}-\de\}\ {\text{and}}\
V:=\inf\{t\geq 0\,;\, X_{t+\th_\rho}= M_{\th_\rho}\}.
\end{align*}
In case $U<V$, we have $M_{\th_\de}=M_{\th_\rho}$ and
 \begin{align*}
&\E_{0}\lp f(M_{\th_\de})\,{\bf 1}_{\{U<V\}}\,|\, \cF_{\th_\rho}\rp
=f(M_{\th_\rho})g\lp X_{\th_\rho},M_{\th_\rho}\rp
=f(M_{\th_\rho})g\lp M_{\th_\rho}-\rho,M_{\th_\rho}\rp,
 \end{align*}
where
 \begin{align}
\label{G1}
&
g(x,y)=\P_{x}\lp  H_{y-\de}<H_{y}\rp=\frac{S(y)-S(x)}{S(y)-S(y-\delta) }
 \end{align}
If $U>V$ then $\th_\de=\th_\rho+\inf\{t\geq 0\,;\, M_{t+\th_\rho}-X_{t+\th_\rho}>\de\}$  and evoking the strong Markov property at $\th_\rho$ yields
 \begin{align*}
&\E_{0}\lp f(M_{\th_\de})\,{\bf 1}_{\{U>V\}}\,|\, \cF_{\th_\rho}\rp
=\E_{X_{\th_\rho}}\lp f(M_{\th_\de})\rp\lp 1-g\lp X_{\th_\rho},M_{\th_\rho}\rp\rp.
 \end{align*}
This completes the proof of (\ref{je1}).  The Markov property of   $(M_{\theta_\de})_{\de\geq 0}$
follows from (\ref{je1}) by applying the tower property of conditional expectations:
\begin{align*}
%\label{jm1}
\E_0\left(f(M_{\theta_\de})\,|\, \cF^M_{\th_\rho}\right)&=
\E_0\left(\E_0\left(f(M_{\theta_\de})\,|\, \cF_{\th_\rho}\right)\,|\, \cF^M_{\th_\rho}\right)
=Q_{\rho,\de}\lp M_{\theta_\rho};f\rp,
\end{align*}
and, similarly,
\begin{align*}
%\label{jm1}
\E_0\left(f(M_{\theta_\de})\,|\, M_{\th_\rho}\right)&=Q_{\rho,\de}\lp M_{\theta_\rho};f\rp.
\end{align*}
\end{proof}

\begin{corollary}
\label{jo1}
For $\de>\rho\geq 0$,  and if   $M_{\theta_\rho}> v$ then
\begin{align}
\label{e5200}
&
\P_0\left(M_{\theta_\de}>v\,|\, M_{\th_\rho}\right)=1,
\end{align}
and if $M_{\theta_\rho}\leq v$ then for $y\leq v$
\begin{align}
\label{e520}
&
\nonumber
\P_0\left(M_{\theta_\de}>v\,|\, M_{\th_\rho}=y\right)\\
&\hskip2cm
=\frac{S(y-\rho)-S(y-\de)}{S(y)-S(y-\de)}\exp\l(-\int_y^v\frac{dS(z)}{S(z)-S(z-\delta)}\r).
\end{align}
%where $y=M_{\theta_\rho}$.

\end{corollary}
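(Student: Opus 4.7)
The first assertion is immediate: since the paths of $X$ are continuous and $M$ is non-decreasing, $\rho < \delta$ forces $\theta_\rho \le \theta_\delta$ and hence $M_{\theta_\delta} \ge M_{\theta_\rho}$, so $\{M_{\theta_\rho} > v\} \subset \{M_{\theta_\delta} > v\}$ and the conditional probability equals $1$.

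For the second assertion I would apply Proposition \ref{j1}, specifically formula (\ref{je5}), with the bounded Borel function $f(u) = \indic_{\{u > v\}}$. Combining this with the Markov property of $(M_{\theta_\de})_{\de\geq 0}$, one obtains
\begin{equation*}
\P_0\lp M_{\theta_\de} > v \,|\, M_{\th_\rho} = y\rp = f(y)\,\P_{y-\rho}\lp H_{y-\de} < H_y\rp + \P_{y-\rho}\lp H_y < H_{y-\de}\rp\,\E_y\lp \indic_{\{M_{\theta_\de} > v\}}\rp.
\end{equation*}
Under the hypothesis $y \le v$ the first term vanishes, so only the second survives and I am left with the product of two factors.

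The two factors are evaluated separately. The first,  $\P_{y-\rho}(H_y < H_{y-\de})$, is the classical scale-function exit probability from the interval $(y-\de, y)$, namely
\begin{equation*}
\P_{y-\rho}\lp H_y < H_{y-\de}\rp = \frac{S(y-\rho) - S(y-\de)}{S(y) - S(y-\de)},
\end{equation*}
which matches the first factor in the claim. The second, $\P_y(M_{\theta_\de} > v)$, is precisely the distribution function of $M_{\theta_\de}$ under $\P_y$ given by (\ref{eq:ei01}) in Theorem \ref{prop_L}; since we are working under the convention $l = -\infty$ we have $y \vee (\de + l) = y$, hence
\begin{equation*}
\P_y\lp M_{\theta_\de} > v\rp = \exp\l(-\int_y^v \frac{dS(z)}{S(z) - S(z-\de)}\r).
\end{equation*}
Multiplying the two expressions yields (\ref{e520}). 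There is no serious obstacle here: the Markov structure already established in Proposition \ref{j1} reduces the problem to two ingredients whose values are standard, one being a two-sided exit probability and the other the already proved Lehoczky distribution. The only minor point worth checking is that the integral in (\ref{eq:ei01}) is correctly started at $y$ (and not at $y \vee (\de+l)$), which follows from the standing assumption $l = -\infty$ made at the start of Section \ref{sec41}.
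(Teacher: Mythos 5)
Your proposal is correct and follows essentially the same route as the paper: monotonicity of $\de\mapsto M_{\theta_\de}$ for the first claim, and for the second, substituting $f=\indic_{(v,\infty)}$ into the semigroup formula of Proposition \ref{j1} so that the term $f(y)$ drops out, leaving the two-sided exit probability $\P_{y-\rho}(H_y<H_{y-\de})$ times $\P_y(M_{\theta_\de}>v)$ evaluated via (\ref{eq:ei01}). The paper merely rewrites $Q_{\rho,\de}$ slightly before substituting, which is immaterial once $f(y)=0$.
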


\begin{proof}
The claim in (\ref{e5200}) when $M_{\theta_\rho}>v$ is obvious since $\rho\mapsto M_{\theta_\rho}$ is nondecreasing. For (\ref{e520}) notice that the function $Q_{\rho,\de}$ in (\ref{je5}) can be rewritten as
\begin{align}
\label{Q1}
\nonumber
 Q_{\rho,\de}\lp y; f\rp&=f(y) + \big( \E_y\lp f(M_{\theta_\de})\rp-f(y)\big) \P_{y-\rho}\lp H_y< H_{y-\de}\rp\\
 &=f(y) + \lp \E_y\big( f(M_{\theta_\de})\rp-f(y)\big) \frac{S(y-\rho)-S(y-\de)}{S(y)-S(y-\de)}.
\end{align}
Substituting  $f(\cdot)={\bf 1}_{(v,+\infty)}(\cdot)$ in (\ref{je1}) and using (\ref{Q1})  yield in case $M_{\theta_\rho}\leq v$
\begin{equation*}
%\label{e521}
 Q_{\rho,\de}\lp y; f\rp=\P_y\lp M_{\theta_\de}>v\rp \frac{S(y-\rho)-S(y-\de)}{S(y)-S(y-\de)},
\end{equation*}
where $y=M_{\th_\rho}$. Formula (\ref{e520}) follows when applying (\ref{eq:ei01}).
\end{proof}

\begin{corollary}
\label{jo11}
For $\de>\rho> 0$  and $v>y>0$
\begin{align}
\label{e5201}
&
\nonumber
\P_0\left(M_{\th_\rho}>y, M_{\theta_\de}>v\right)\\
&\hskip2cm
=\exp\l(-\int_0^y\frac{dS(z)}{S(z)-S(z-\rho)}-\int_y^v\frac{dS(z)}{S(z)-S(z-\delta)}\r).
\end{align}
\end{corollary}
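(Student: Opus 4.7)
The plan is to condition on $M_{\theta_\rho}$ and use Corollary \ref{jo1} together with the marginal distribution of $M_{\theta_\rho}$ from Theorem \ref{prop_L}. Since $\rho\mapsto M_{\theta_\rho}$ is nondecreasing and $v>y$, I decompose
\[
\P_0(M_{\theta_\rho}>y,\,M_{\theta_\delta}>v)=\P_0(M_{\theta_\rho}>v)+\int_y^v \P_0(M_{\theta_\delta}>v\mid M_{\theta_\rho}=z)\,\P_0(M_{\theta_\rho}\in dz),
\]
noting that on $\{M_{\theta_\rho}>v\}$ one automatically has $M_{\theta_\delta}>v$ by \eqref{e5200}, while on $\{y<M_{\theta_\rho}\le v\}$ the conditional probability is given by \eqref{e520}. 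From \eqref{eq:ei01} applied with $x=0$ and $l=-\infty$, the tail $\P_0(M_{\theta_\rho}>z)=\exp(-A(z))$ where I set $A(z):=\int_0^z dS(u)/(S(u)-S(u-\rho))$, so the law of $M_{\theta_\rho}$ is absolutely continuous with respect to $dS$ with density $\exp(-A(z))/(S(z)-S(z-\rho))$.

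Writing also $B(z):=\int_z^v dS(u)/(S(u)-S(u-\delta))$, the integrand becomes
\[
\frac{S(z-\rho)-S(z-\delta)}{(S(z)-S(z-\delta))(S(z)-S(z-\rho))}\,e^{-A(z)-B(z)}\,dS(z).
\]
The crucial observation, which I expect to be the main (but quite mild) obstacle, is that this is an exact differential: a direct computation of $d(A+B)$ gives
\[
d(A+B)(z)=-\frac{S(z-\rho)-S(z-\delta)}{(S(z)-S(z-\delta))(S(z)-S(z-\rho))}\,dS(z),
\]
so the integrand is precisely $-d\!\left(e^{-A(z)-B(z)}\right)$. Consequently
\[
\int_y^v \P_0(M_{\theta_\delta}>v\mid M_{\theta_\rho}=z)\,\P_0(M_{\theta_\rho}\in dz)=e^{-A(y)-B(y)}-e^{-A(v)-B(v)}=e^{-A(y)-B(y)}-\P_0(M_{\theta_\rho}>v),
\]
since $B(v)=0$.

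Adding back $\P_0(M_{\theta_\rho}>v)$ the two telescoping terms cancel, leaving
\[
\P_0(M_{\theta_\rho}>y,\,M_{\theta_\delta}>v)=e^{-A(y)-B(y)},
\]
which is exactly \eqref{e5201}. The only care needed is keeping track of signs when differentiating $B$ (whose lower endpoint is $z$) and partial-fraction-style combining the two Stieltjes densities into the single factor $S(z-\rho)-S(z-\delta)$ appearing in \eqref{e520}; once that identification is made the whole computation is a one-line antiderivative.
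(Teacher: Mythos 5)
Your argument is correct and is exactly the computation the paper has in mind: you condition on $M_{\theta_\rho}$ via (\ref{e5200})--(\ref{e520}), integrate against the density (\ref{density00}), and observe that the resulting integrand is an exact differential. The only blemish is a spurious minus sign in your displayed formula for $d(A+B)$ — one has $d(A+B)=+\frac{S(z-\rho)-S(z-\delta)}{(S(z)-S(z-\rho))(S(z)-S(z-\delta))}\,dS(z)$ — but your subsequent identification of the integrand with $-d\bigl(e^{-A(z)-B(z)}\bigr)$ and the telescoping that follows are consistent with the correct sign, so the conclusion stands.
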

\begin{proof} The claim follows by straightforward  calculations using the conditional probability given in (\ref{e520}) and the following  expression for the $\P_0$-density of $M_{\theta_\rho}$
\begin{align}
\label{density00}
%\nonumber
&f_{M_{\th_\rho}}(y)=\frac{S'(y)}{S(y)-S(y-\rho)}\exp\l(-\int_0^y\frac{S(dt)}{S(t)-S(t-\rho)}\r)
\end{align}
obtained from (\ref{eq:ei01}).
\end{proof}

Next we calculate the generator of  $(M_{\theta_\rho})_{\rho\geq 0}$.
It is assumed for the rest of this section that the scale function $S$ is in $C^1$.

\begin{proposition}
\label{ge1}
 For $\rho>0$ and a measurable function  $f$ with compact support  it holds a.s.
\begin{equation}
\label{e540}
\lim_{\de\downarrow\rho}\frac{\E_0(f(M_{\th_\de})\,|\, \cF_{\th_\rho})-f(M_{\th_\rho})}{\de-\rho}
=\cA_\rho f(M_{\th_\rho}),
\end{equation}
where $\cA_\rho$ is  given by
\begin{align*}
&\cA_\rho f(y):=\frac{S'(y-\rho)}{S(y)-S(y-\rho)}\int_{y}^{r}\frac{f(z)-f(y)}{S(z)-S(z-\rho)}\\
%\exp\left(-\int_{y}^{z}\frac{S(dt)}{S(t)-S(t-\rho)}\right)dS(z)\\
%\e^{-\int_{y}^{z}\frac{S(dt)}{S(t)-S(t-\rho)}}
&\hskip5cm \times\exp\left(-\int_{y}^{z}\frac{S(dt)}{S(t)-S(t-\rho)}\right)
 dS(z).
\end{align*}
\end{proposition}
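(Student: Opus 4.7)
The plan is to start from the explicit form of the conditional expectation given by Proposition \ref{j1} combined with identity (\ref{Q1}): for $y = M_{\th_\rho}$,
\[
Q_{\rho,\de}(y;f) - f(y) = \bigl(\E_y(f(M_{\th_\de})) - f(y)\bigr)\,\frac{S(y-\rho)-S(y-\de)}{S(y)-S(y-\de)}.
\]
Dividing by $\de - \rho$ and letting $\de \downarrow \rho$, the right-hand side factors into two limits that I would treat separately. The first factor, after division by $\de-\rho$, gives
\[
\frac{1}{\de-\rho}\cdot\frac{S(y-\rho)-S(y-\de)}{S(y)-S(y-\de)} \;\xrightarrow[\de\downarrow\rho]{}\; \frac{S'(y-\rho)}{S(y)-S(y-\rho)},
\]
using the $C^1$ hypothesis on $S$ for the numerator and continuity of $S$ in the denominator. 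This already produces the prefactor appearing in $\cA_\rho f$.

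For the second factor, I would express the expectation $\E_y(f(M_{\th_\de}))$ via the distribution of $M_{\th_\de}$ obtained from Theorem \ref{prop_L}: since $l=-\infty$, formula (\ref{eq:ei01}) started at $y$ gives the density
\[
p_\de(y,z) \;=\; \frac{S'(z)}{S(z)-S(z-\de)}\exp\!\left(-\int_y^z \frac{S'(u)\,du}{S(u)-S(u-\de)}\right), \qquad z\geq y.
\]
Then pass to the limit $\de\downarrow\rho$ inside the integral $\E_y(f(M_{\th_\de})) = \int_y^\infty f(z)\,p_\de(y,z)\,dz$. Since $f$ has compact support and $\de\mapsto 1/(S(z)-S(z-\de))$ is monotone in $\de$, a dominated convergence argument on a neighbourhood $\de\in[\rho,\rho+\ep_0]$ yields $\E_y(f(M_{\th_\de})) \to \E_y(f(M_{\th_\rho}))$. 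Because $p_\rho(y,\cdot)$ is a probability density on $[y,\infty)$, one rewrites
\[
\E_y(f(M_{\th_\rho})) - f(y) = \int_y^\infty (f(z)-f(y))\,p_\rho(y,z)\,dz.
\]

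Combining the two limits gives exactly the expression $\cA_\rho f(y)$ claimed in the statement, evaluated at $y = M_{\th_\rho}$. The step that deserves the most care is the dominated convergence for $\de\downarrow\rho$ inside $\E_y(f(M_{\th_\de}))$: one needs a uniform-in-$\de$ integrable majorant for the densities $p_\de(y,\cdot)$ on a right-neighbourhood of $\rho$. This is where the $C^1$ regularity of $S$ enters in an essential way, since it allows one to bound $S(z)-S(z-\de)$ from below uniformly in $\de\in[\rho,\rho+\ep_0]$ by a positive multiple of $S(z)-S(z-\rho)$ on the compact $z$-support of $f$, thereby controlling both the prefactor of the density and the exponential. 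Everything else is direct substitution and use of the already-established formulas (\ref{Q1}) and (\ref{eq:ei01}).
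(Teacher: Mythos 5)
Your proposal is correct and is precisely the ``fairly straightforward calculation'' that the paper's proof of Proposition \ref{ge1} invokes and then skips: factor the increment via (\ref{Q1}), identify the limit of $\frac{S(y-\rho)-S(y-\de)}{(\de-\rho)(S(y)-S(y-\de))}$ using the $C^1$ assumption on $S$, and pass to the limit in $\E_y(f(M_{\th_\de}))$ using the density obtained from (\ref{eq:ei01}). The only remark is that the domination step is even easier than you suggest, since monotonicity of $S$ gives $S(z)-S(z-\de)\ge S(z)-S(z-\rho)$ for $\de\ge\rho$ directly, so no $C^1$ lower bound is needed there.
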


\begin{proof} Using (\ref{je1}) and (\ref{Q1}) the claim follows from fairly straightforward calculations. We skip the details.
\end{proof}
\noindent

\begin{remark}
\label{rmk00}
From Proposition \ref{ge1} we concude that the jump measure  of    $(M_{\theta_\rho})_{\rho\geq 0}$
is given for $z>0$ by
\begin{align*}
&\nu_{y,\rho}(dz)=\frac{S'(y-\rho)}{S(y)-S(y-\rho)}\frac{S'(y+z)}{S(y+z)-S(y+z-\rho)}\\%\int_{y}^{y+z}\frac{f(z)-f(y)}{S(z)-S(z-\rho)}\\
%\exp\left(-\int_{y}^{z}\frac{S(dt)}{S(t)-S(t-\rho)}\right)dS(z)\\
%\e^{-\int_{y}^{z}\frac{S(dt)}{S(t)-S(t-\rho)}}
&\hskip5cm \times\exp\left(-\int_{y}^{y+z}\frac{S(dt)}{S(t)-S(t-\rho)}\right)\, dt.
\end{align*}
\end{remark}

For  $\rho>0$ let
\begin{align}
\label{T1}
&T^+_\rho:=\inf\{s>\rho\,:\, M_{\th_s}>M_{\th_\rho}\},
\end{align}
i.e., $T^+_\rho$ is the first jump ``time''  for the process $M_{\theta}$ after ``time''  $\rho.$ We introduce also the "companion" of $T^+_\rho$ via
\begin{align}
\label{L1}
&T^-_\rho:=\sup\{s<\rho\,:\, M_{\theta_s}<M_{\theta_\rho}\} =\inf\{s<\rho\,:\, M_{\theta_s}=M_{\theta_\rho}\}.
\end{align}
From the Markov property of $(M_{\theta_\rho})_{\rho\geq 0}$ it follows that   $T^+_\rho$ and  $T^-_\rho$ are conditionally independent given $M_{\theta_\rho}$.  The objective now is to calculate the conditional laws of  $T^+_\rho$ and  $T^-_\rho$. We have the following result.

\begin{proposition}
\label{LAST}
For $\de>\rho$
\begin{align}
\label{T2}
&\P_0\lp T^+_\rho>\de\,|\, M_{\theta_\rho}=y\rp=
 \frac{S(y)-S(y-\rho)}{S(y)-S(y-\de)},
\end{align}
and for $0<\de<\rho$
\begin{align}
\label{L2}
\nonumber
&\P_0\lp T^-_\rho<\de\,|\, M_{\theta_\rho}=y\rp\\
&\hskip2cm =\exp\left(-\int_{0}^{y}\frac{S(dt)}{S(t)-S(t-\de)} + \int_{0}^{y}\frac{S(dt)}{S(t)-S(t-\rho)}\right).
\end{align}
\end{proposition}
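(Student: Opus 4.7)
The plan is to translate both events into assertions about $M_{\th_\cdot}$ at two time points and then reduce both parts of the proposition to a single conditional probability. Since $\rho\mapsto M_{\th_\rho}$ is non-decreasing, right-continuous and pure-jump, the event $\{T^+_\rho>\de\}$ (with $\de>\rho$) coincides with $\{M_{\th_\de}=M_{\th_\rho}\}$, and up to a $\P_0$-null set the event $\{T^-_\rho<\de\}$ (with $\de<\rho$) also coincides with $\{M_{\th_\de}=M_{\th_\rho}\}$. Both claims therefore amount to computing $\P_0(M_{\th_\de}=M_{\th_\rho}\mid M_{\th_\rho}=y)$, in two different orderings of $\rho$ and $\de$.

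For (\ref{T2}): given $M_{\th_\rho}=y$ we have $X_{\th_\rho}=y-\rho$, and by the strong Markov property at $\th_\rho$ the event $M_{\th_\de}=y$ is equivalent to the diffusion, started at $y-\rho$, hitting $y-\de$ strictly before $y$. The standard scale formula, already used in (\ref{G1}), gives $\P_{y-\rho}(H_{y-\de}<H_y)=(S(y)-S(y-\rho))/(S(y)-S(y-\de))$, which is exactly (\ref{T2}).

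For (\ref{L2}) one must condition on the \emph{later} variable $M_{\th_\rho}$, so a short Bayes argument is needed. The idea is to evaluate the joint law of $(M_{\th_\de},M_{\th_\rho})$ on the singular set $\{M_{\th_\de}=M_{\th_\rho}\}$ by running the process forward from $\th_\de$: since $X_{\th_\de}=M_{\th_\de}-\de$, the same scale formula yields $\P_0(M_{\th_\rho}=M_{\th_\de}\mid M_{\th_\de}=y)=(S(y)-S(y-\de))/(S(y)-S(y-\rho))$, hence
\[
\P_0\bigl(M_{\th_\de}=M_{\th_\rho},\,M_{\th_\rho}\in dy\bigr)=f_{M_{\th_\de}}(y)\,\frac{S(y)-S(y-\de)}{S(y)-S(y-\rho)}\,dy.
\]
Dividing by $f_{M_{\th_\rho}}(y)$ read off from (\ref{density00}), the prefactor $S'(y)$ together with the two differences $S(y)-S(y-\de)$ and $S(y)-S(y-\rho)$ cancel cleanly, and only the two exponential integrals survive, combining into the right-hand side of (\ref{L2}).

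The only mildly delicate point is the a.s.\ identification $\{T^-_\rho<\de\}=\{M_{\th_\de}=M_{\th_\rho}\}$: it relies on the fact that $\P_0$-a.s.\ the process $M_{\th_\cdot}$ does not jump exactly at the fixed level $\de$, which follows from the continuity of $S$ and the atomlessness of the jump measure computed just above, and can anyway be bypassed by first establishing the weak-inequality version. Everything else is mechanical, so the real obstacle is bookkeeping of densities and signs rather than any new probabilistic input.
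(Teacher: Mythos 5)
Your proof is correct and follows essentially the same route as the paper: both parts are reduced to $\P_0(M_{\th_\de}=M_{\th_\rho}\mid\cdot)$, with (\ref{T2}) obtained from the two-sided exit probability in natural scale after $\th_\rho$ (the paper gets this by setting $v=y$ in (\ref{e520}), which rests on the same computation as your direct strong-Markov argument), and (\ref{L2}) obtained by the same Bayes inversion through the densities (\ref{density00}). Your remark on the a.s.\ identification of $\{T^-_\rho<\de\}$ with $\{M_{\th_\de}=M_{\th_\rho}\}$ is a small point the paper passes over silently.
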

\begin{proof}
Since
\begin{align}
\label{TT2}
&\P_0\lp T^+_\rho>\de\,|\, M_{\theta_\rho}=y\rp=
\P_0\lp M_{\theta_\de}=y\,|\, M_{\theta_\rho}=y\rp
\end{align}
the claim concerning $T^+_\rho$ follows from (\ref{e520}) by taking therein $v=y$.
For $T^-_\rho$ we have with  $0<\de<\rho$
\begin{align*}
%\label{pr}
\P_0\lp T^-_\rho<\de\,|\, M_{\theta_\rho}=y\rp &=\P_0\lp M_{\th_\de}=M_{\th_\rho}\,|\, M_{\th_\rho}=y\rp\\
& =\P_0\lp M_{\th_\de}=M_{\th_\rho},\, M_{\th_\rho}\in dy\rp / \P_0\lp M_{\th_\rho}\in dy\rp\\
& =\P_0\lp M_{\th_\de}=M_{\th_\rho}\, |\, M_{\th_\de}=y\rp f_{M_{\th_\de}}(y) / f_{M_{\th_\rho}}(y),
\end{align*}
where the densities  $f_{M_{\th_\de}}$ and  $f_{M_{\th_\de}}$ are as given in  (\ref{density00})).  Applying now (\ref{TT2}) yields  the claimed formula.
\end{proof}

For $\rho>0$ and $T^+_\rho$ as in (\ref{T1}) let  $J_\rho$ denote the size of the jump at time $T^+_\rho$, i.e.,
\begin{align}
\label{jsize}
&J_\rho:=M_{\th_{T^+_\rho}}-M_{\th_{\rho}}.
\end{align}
The joint (conditional) distribution of  $T^+_\rho$ and $J_\rho$ is given in the next proposition.

\begin{proposition}
\label{JOINT}
For $\de>\rho$ and $z>0$
\begin{align}
\label{J2}
&
\nonumber
\P_0\lp T^+_\rho<\de, J_\rho>z\,|\, M_{\theta_\rho}=y\rp
=\lp S(y)-S(y-\rho)\rp\\
&\hskip1cm\times
\int_\rho^\de \frac{S'(y-u)}{\lp S(y)-S(y-u)\rp^2}\exp\l(-\int_y^{y+z}\frac{S(dt)}{S(t)-S(t-u)}\r)du.
\end{align}
\end{proposition}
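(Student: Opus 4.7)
The plan is to condition on $M_{\theta_\rho}=y$ and to decompose the joint law of $(T^+_\rho, J_\rho)$ as the product of the conditional density of $T^+_\rho$ and the conditional tail of $J_\rho$ given $T^+_\rho=u$. First I would differentiate the survival function in (\ref{T2}) with respect to $u$ to obtain the conditional density
\[
p(u)=(S(y)-S(y-\rho))\,\frac{S'(y-u)}{(S(y)-S(y-u))^{2}},\qquad u>\rho,
\]
which already reproduces the first two factors in the integrand of (\ref{J2}).

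Next I would extract the conditional tail of the jump size given a jump occurs at ``time'' $u$ from state $y$. The Markov property of $(M_{\theta_s})_{s\geq 0}$ combined with Proposition~\ref{ge1} identifies the jump kernel $\nu_{y,u}$ recorded in Remark~\ref{rmk00}. The substitution $v=\int_y^{y+w} dS(t)/(S(t)-S(t-u))$ converts the tail integral into a pure exponential, giving the total rate
\[
\lambda(y,u):=\nu_{y,u}((0,\infty))=\frac{S'(y-u)}{S(y)-S(y-u)},
\]
consistent with the hazard rate obtained from Step~1, together with
\[
\nu_{y,u}((z,\infty))=\lambda(y,u)\,\exp\!\left(-\int_y^{y+z}\frac{dS(t)}{S(t)-S(t-u)}\right).
\]
Normalizing then produces the conditional tail
\[
\P_0(J_\rho>z\,|\,T^+_\rho=u,\,M_{\theta_\rho}=y)=\exp\!\left(-\int_y^{y+z}\frac{dS(t)}{S(t)-S(t-u)}\right),
\]
which is precisely the exponential factor appearing in (\ref{J2}).

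Finally I would assemble the two ingredients by integrating the product $p(u)\cdot\P_0(J_\rho>z\,|\,T^+_\rho=u,M_{\theta_\rho}=y)$ over $u\in(\rho,\de)$, recovering (\ref{J2}). The main technical point is to justify the disintegration used in this last step: that the intensity of the first jump of $M_\theta$ after index $\rho$ factors as (hazard of the jump time) $\times$ (normalized jump kernel $\nu_{y,u}$). Because the index parameter itself enters the rate, $(M_{\theta_s})_{s\geq 0}$ is time-inhomogeneous and the factorization is not entirely automatic; nevertheless it is standard for pure jump Markov processes with state-dependent rates and follows from the Markov property established in Proposition~\ref{j1} combined with the explicit generator of Proposition~\ref{ge1}.
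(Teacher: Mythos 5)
Your decomposition --- (conditional density of $T^+_\rho$) $\times$ (conditional tail of $J_\rho$ given $T^+_\rho=u$), integrated over $u\in(\rho,\delta)$ --- is exactly the architecture of the paper's proof, and your first step (differentiating (\ref{T2}) to obtain the density (\ref{DT})) is identical. Where you diverge is in identifying the conditional law of the jump size, and that is where the one real issue sits. You obtain the exponential factor by normalizing the jump kernel $\nu_{y,u}$ of Remark \ref{rmk00}, and you yourself flag that the disintegration ``first-jump time has hazard $\lambda(y,u)$, and given a jump at $u$ the size has law $\nu_{y,u}/\lambda(y,u)$'' is not automatic here. It genuinely is not: Proposition \ref{ge1} only gives a pointwise derivative of the one-dimensional semigroup, and the semigroup identity (\ref{je5}) does not separate the first jump from later ones, so the factorization cannot be read off from Propositions \ref{j1} and \ref{ge1} alone without an extra uniqueness/construction argument for the time-inhomogeneous jump process. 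The paper closes exactly this gap with a short direct argument which you should substitute for your Step 2: let $\theta':=\inf\{t\geq\theta_\rho:\,X_t=M_{\theta_\rho}\}$ be the return time of $X$ to the previous maximum. Then $T^+_\rho=M_{\theta_\rho}-\inf_{\theta_\rho\leq t\leq\theta'}X_t$ is $\cF_{\theta'}$-measurable, and the strong Markov property of $X$ at $\theta'$ together with Lehoczky's formula (\ref{eq:ei01}) gives
\begin{equation*}
\P_0\lp J_\rho>z\mid\cF_{\theta'}\rp=F\lp M_{\theta_\rho},T^+_\rho\rp,\qquad
F(y,u)=\P_y\lp M_{\theta_u}>y+z\rp=\exp\l(-\int_y^{y+z}\frac{S(dt)}{S(t)-S(t-u)}\r),
\end{equation*}
which is precisely your normalized tail but now proved; taking $\E_0\lp{\bf 1}_{\{T^+_\rho<\delta\}}F(M_{\theta_\rho},T^+_\rho)\mid M_{\theta_\rho}=y\rp$ with the density (\ref{DT}) then yields (\ref{J2}) with no appeal to the generator. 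A minor further point in your route: the evaluation $\nu_{y,u}((0,\infty))=S'(y-u)/(S(y)-S(y-u))$ requires $\int_y^{\infty}S(dt)/(S(t)-S(t-u))=+\infty$, which holds under the standing assumption of Section 6 that $\theta_\delta<\infty$ a.s., but deserves explicit mention.
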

\begin{proof} From (\ref{T2}) the conditional density of $T^+_\rho$ given that $M_{\theta_\rho}=y$ is calculated to be 
\begin{align}
\label{DT}
&
f_{T^+_\rho}(u)= {\bf 1}_{\{u>\rho\}} \frac{S'(y-u)\lp S(y)-S(y-\rho)\rp}{\lp S(y)-S(y-u)\rp^2}.
\end{align}
Next let
$
\th':= \inf\{t\geq \th_\rho\,:\, X_t=M_{\theta_\rho}\},
$
and consider
\begin{align*}
&
\P_0\lp  J_\rho>z\,|\, \cF_{\th'}\rp=
\P_0\lp M_{\th_{T^+_\rho}}>z+M_{\th_{\rho}} \,|\, \cF_{\th'}\rp
=:F\lp M_{\th_{\rho}},T^+_\rho\rp,
\end{align*}
where (cf. (\ref{eq:ei01}))
\begin{align}
\label{DT1}
&
F\lp y,u\rp=\P_y\lp M_{\th_u}>z+y\rp=\exp\l(-\int_y^{y+z}\frac{S(dt)}{S(t)-S(t-u)}\r).
\end{align}
Observing that
\begin{align*}
\P_0\lp T^+_\rho<\de, J_\rho>z\,|\, M_{\theta_\rho}=y\rp
&=\P_0\lp \P_0\lp T^+_\rho<\de, J_\rho>z\,|\, \cF_{\th'}\rp\,|\, M_{\theta_\rho}=y\rp\\
&=\E_0\lp {\bf 1}_{\{T^+_\rho<\de\}}F\lp M_{\th_{\rho}},T^+_\rho\rp\,|\, M_{\theta_\rho}=y\rp,
\end{align*}
and using herein (\ref{DT}) and (\ref{DT1}) yields  the claim.
\end{proof}

%add
\begin{remark}
 \label{appendix}
Results presented above for the process $(M_{\theta_\theta})_{\theta\geq 0}$ can be seen in the general  framework of piecewise constant real-valued strong Markov processes. Indeed, let  $Y=(Y_t)_{t\geq 0}$ be such a process which at time $s\geq 0$ is located at $x\in\R$. Define
\begin{equation}
\label{Dr}
{\cal {T}}_{x,s}:=\inf\{ t\geq 0\,:\, Y_{s+t}\not= x\},
\end{equation}
and notice that the first time the process leaves the state $x$ after time $s$ is then $s+{\cal {T}}_{x,s}$. Define also for $t\geq 0$
\begin{align*}
%\label{Phi}
\phi_s(t):=\phi_{x,s}(t)&:=\P({\cal {T}}_{x,s}\geq t)%\,|\, Y_s=x).
\end{align*}
%\textcolor{blue}{Since we have said that $Y_s=x$, may be it is not necessary to repeat $Y_s=x$ ?}\\
Then using  the strong Markov property it is seen that
for  all $t,u\geq 0$,% I suggest to delete $s$ because it is fixed.}
\begin{align}
\label{lemma1}
\phi_s(t+u)&=\phi_s(t)\phi_{t+s}(u)=\phi_s(u)\phi_{u+s}(t).
\end{align}
Consequently,  if for all $s\geq 0$
\begin{align*}
 %\label{lemma2}
&\phi'_s(0+):=\lim_{\varepsilon\downarrow 0}\frac{\phi_s(\varepsilon)-1}{\varepsilon}
\end{align*}
 exists and is locally integrable then
 \begin{align}
 \label{lemma3}
 \phi_s(t)&=\exp\lp\int_0^t\phi'_{r+s}(0+)dr\rp.
\end{align}
%\textcolor{blue}{The rest is not very clear to me, I cannot see the goal. I write the details below, then we simplify them.\\
To see that the formulas (\ref{lemma1}) and  (\ref{lemma3}) are indeed valid in our particular  case let $Y_t:=M_{\theta_t}$. Introduce  
\[
{\cal {T}}_{y,\rho}:=\inf\{ t\geq 0 : M_{\theta_{t+\rho}}>y\}%M_{\theta_\rho}\}
\]
and
\[
\phi_\rho (t):=\P({\cal {T}}_{y,\rho}\geq t\,|\, Y_\rho=y),
\]
where $y= M_{\theta_\rho}$. Then with  $T^+_\rho$ as defined in (\ref{T1}) we have
\[
{\cal {T}}_{y,\rho}=T^+_\rho-\rho,
\]
and
\[
\phi_\rho (t)=\P_0(T^+_\rho\geq \rho+t|M_{\theta_\rho}=y).
\] 
From (\ref{T2}) 
\begin{equation}
\label{lemma1A}
\phi_\rho (t)=\frac{S(y)-S(y-\rho)}{S(y)-S(y-t-\rho)},
\end{equation}
and it is straightforward to check that (\ref{lemma1}) holds for the expression on the right hand side of (\ref{lemma1A}). If $S'$ exists we have 
\begin{align*}
 %\label{lemma2}
&\phi'_{\rho+s}(0+)=-\frac {S'(y-\rho-s)}{S(y)-S(y-\rho+s)},
\end{align*}
and it is easily seen that 
\begin{align*}
%\label{Phi}
\exp\lp-\int_0^t\frac {S'(y-\rho-r)}{S(y)-S(y-r-\rho)}dr\rp= \frac{S(y)-S(y-\rho)}{S(y)-S(y-t-\rho)}
\end{align*}
i.e., the expression on the right hand side of (\ref{lemma1A}) satisfies  (\ref{lemma3}). Similar analysis can be performed concerning (\ref{L2}) and (\ref{J2}), but we do not go into these details.

\end{remark}

\subsection{Comparision of  $(M_{\theta_\de})_{\de\geq 0}$ and  $(D^-_{H_\alpha})_{\al\geq 0}$  }
%$(M_{\theta_\de})_{\de\geq 0}$}
\label{sec42}

From the strong Markov property of $X$ it follows easily, as observed in (\cite{salminenvallois20}),  that   $(D^-_{H_\al})_{\al\geq 0}$ is also Markov with respect to its own filtration. Moreover, it is seen from the proof of  Proposition 4.1 in ibid.
 that for $\alpha\geq\beta>0$ and a measurable and bounded function $f$
\begin{equation}
\label{ke1}
\E_0\left(f(D^-_{H_\alpha})\,|\, \cF_{H_\beta}\right)=\E_0\left(f(D^-_{H_\alpha})\,|\, D^-_{H_\beta}\right)=\widehat Q_{\beta,\alpha}\lp D^-_{H_\beta}; f\rp,
\end{equation}
where
\begin{align}\hskip-.2cm
\label{ke5}
\widehat Q_{\beta,\alpha}\lp y; f\rp&:=f(y)\P_{\beta}\lp D^-_{H_\alpha}\leq y\rp + \E_\beta\lp f(D^-_{H_\alpha})\,;\, D^-_{H_\alpha}>y\rp.
\end{align}
Notice the structural  resemblance of the formulas (\ref{ke5}) and (\ref{je5}). From  the distribution function of $D^-_{H_\alpha}$ as  given in (\ref{eqSaVa}) we can calculate the density of $D^-_{H_\alpha}$, and then find an explicit expression of the semigroup $\widehat Q$ in terms of the scale function of $X$. 

We present here an alternative approach for this expression based solely on the distributions associated  with $M_{\theta_\delta},\, \delta>0$.  Recalling  (\ref{DM1}),  the density of $D^-_{H_\alpha}$ can be computed from the distribution function of $M_{\theta_\delta}$ given in  (\ref{eq:ei01}).  Indeed, taking the derivative therein with respect $\delta$, assuming $l=-\infty$  and  $X(0)=0$ we have
\begin{align}
\label{e520x}
&
\nonumber
\hskip-1cm\P_0\lp D^-_{H_\alpha}\in d\delta\rp/d\delta
%&\hskip2cm
=\int_0^\alpha\frac{S'(z-\delta)S'(z)}{(S(z)-S(z-\delta))^2}\,dz
\\&\hskip4cm\times
\exp\l(-\int_0^\alpha\frac{dS(z)}{S(z)-S(z-\delta)}\r).
\end{align}
To proceed,  we extend  (\ref{DM1}) to get
\begin{equation}\label{DM2}
   \{ D^-_{H_\alpha}< \delta, D^-_{H_\beta}< \rho,\}=\{M_{\theta_\delta}>\alpha, M_{\theta_\rho}> \beta\}.
\end{equation}
Then Corollary \ref{jo11} yields the following result.
\begin{proposition}
\label{jo111}
For $\de\geq\rho> 0$  and $\alpha>\beta>0$
\begin{align}
\label{e5202}
&
\nonumber
\P_0\left(D^-_{H_\alpha}< \delta, D^-_{H_\beta}< \rho\right)\\
&\hskip2cm
=\exp\l(-\int_0^\beta\frac{dS(z)}{S(z)-S(z-\rho)}-\int_\beta^\alpha\frac{dS(z)}{S(z)-S(z-\delta)}\r).
\end{align}
\end{proposition}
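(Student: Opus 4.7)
The plan is to deduce Proposition \ref{jo111} directly from Corollary \ref{jo11} by exploiting the two-parameter extension of the pathwise identity \eqref{DM1}.

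First, I would establish the set identity \eqref{DM2}. For each fixed pair of parameters, the single-parameter identity \eqref{DM1} gives $\{D^-_{H_\alpha}\leq\delta\}=\{M_{\theta_\delta}\geq\alpha\}$ and $\{D^-_{H_\beta}\leq\rho\}=\{M_{\theta_\rho}\geq\beta\}$. Intersecting these two events (and using continuity of the paths of $X$ to interchange strict and non-strict inequalities on a $\P_0$-null set, so the formulation with strict inequalities is the relevant one) yields
\[
\{D^-_{H_\alpha}<\delta,\,D^-_{H_\beta}<\rho\}=\{M_{\theta_\delta}>\alpha,\,M_{\theta_\rho}>\beta\}
\]
modulo $\P_0$-null sets. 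The only thing to be a bit careful about here is consistency of the monotonicities: with $\delta\geq\rho$ we have $\theta_\delta\geq\theta_\rho$ and hence $M_{\theta_\delta}\geq M_{\theta_\rho}$, while with $\alpha\geq\beta$ we have $D^-_{H_\alpha}\geq D^-_{H_\beta}$, so each of the four events is of the same ``shape'' and the pairwise intersection makes sense.

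Second, I would apply Corollary \ref{jo11} with the substitutions $\rho\leftarrow\rho$, $\delta\leftarrow\delta$, $y\leftarrow\beta$, $v\leftarrow\alpha$. The hypothesis $\delta>\rho\geq 0$ and $v>y>0$ of the corollary is satisfied because $\delta\geq\rho>0$ and $\alpha>\beta>0$ (the boundary case $\delta=\rho$ reduces to the one-parameter statement from \eqref{eq:ei01} and can be handled by continuity). Substituting into \eqref{e5201} gives exactly the right-hand side of the claimed formula \eqref{e5202}.

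The main (and only) conceptual step is the first one: translating the joint event about the maximum drawdown process at two hitting times into a joint event about the maximum-at-drawdown process at two drawdown thresholds. Once this translation is made, the rest is just reading off Corollary \ref{jo11}, so there is no serious obstacle. It is worth emphasizing in the write-up that this proposition is exactly the manifestation, at the level of two-dimensional marginals, of the duality between the processes $(M_{\theta_\delta})_{\delta\geq 0}$ and $(D^-_{H_\eta})_{\eta\geq 0}$ that was highlighted earlier in the paper.
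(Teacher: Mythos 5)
Your proposal is correct and takes essentially the same route as the paper: the paper likewise obtains the result by extending the pathwise identity (\ref{DM1}) to the two-parameter identity (\ref{DM2}) and then reading off Corollary \ref{jo11} with $y=\beta$, $v=\alpha$. Your additional remarks on the null-set adjustment of strict versus non-strict inequalities and on the boundary case $\delta=\rho$ merely fill in details the paper leaves implicit.
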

\noindent
Differentiating in (\ref{e5202}) with respect to $\rho$ yields
\begin{align}
\label{e5203}
&
\nonumber
\P_0\left(D^-_{H_\alpha}< \delta, D^-_{H_\beta}\in d\rho\right)/d\rho
=\int_0^\beta\frac{S'(z-\rho)S'(z)}{(S(z)-S(z-\rho))^2}\,dz\\
&\hskip2cm
\times\exp\l(-\int_0^\beta\frac{dS(z)}{S(z)-S(z-\rho)}-\int_\beta^\alpha\frac{dS(z)}{S(z)-S(z-\delta)}\r).
\end{align}
From  (\ref{e520x}) we deduce  an explicit form of the density of $D^-_{H_\beta}$ and using this in  (\ref{e5203}) yields   the conditional law describing the semigroup of  $(D^-_{H_\eta})_{\eta\geq 0}$. This is   stated in the next proposition.
\begin{proposition}
\label{jo112}
For  $\alpha>\beta>0$
\begin{align}
\label{X}
&
\nonumber
\P_0\left(D^-_{H_\alpha}\leq \delta\,|\,  D^-_{H_\beta}= \rho\right)
=\begin{cases}
\displaystyle{\exp\l(-\int_\beta^\alpha\frac{dS(z)}{S(z)-S(z-\delta)}\r)},& \de\geq\rho,\\
0,& \de<\rho\\
\end{cases}
\end{align}
\end{proposition}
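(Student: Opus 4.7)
The case $\delta<\rho$ is immediate from monotonicity: since $\alpha\mapsto D^-_{H_\alpha}$ is non-decreasing, the event $\{D^-_{H_\beta}=\rho,\,D^-_{H_\alpha}\leq\delta\}$ is empty for $\delta<\rho$. So the substantive claim concerns $\delta\geq \rho$, and my plan is to obtain it as a direct ratio of the joint density derived in (\ref{e5203}) and the marginal density of $D^-_{H_\beta}$ obtained from (\ref{e520x}).

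More concretely, I would first specialize (\ref{e520x}) to the pair $(\beta,\rho)$ in place of $(\alpha,\delta)$, which yields the explicit density
\[
\P_0\bigl(D^-_{H_\beta}\in d\rho\bigr)/d\rho
=\Bigl(\int_0^\beta\tfrac{S'(z-\rho)S'(z)}{(S(z)-S(z-\rho))^2}\,dz\Bigr)\exp\!\Bigl(-\!\int_0^\beta\tfrac{dS(z)}{S(z)-S(z-\rho)}\Bigr).
\]
Next, I would read off (\ref{e5203}) the joint density $\P_0(D^-_{H_\alpha}\leq \delta,\,D^-_{H_\beta}\in d\rho)/d\rho$. Dividing the two expressions, one sees that the common prefactor, namely the integral $\int_0^\beta S'(z-\rho)S'(z)/(S(z)-S(z-\rho))^2\,dz$ multiplied by $\exp(-\int_0^\beta dS/(S-S(\cdot-\rho)))$, cancels exactly, leaving only the extra exponential factor
\[
\exp\!\Bigl(-\!\int_\beta^\alpha\tfrac{dS(z)}{S(z)-S(z-\delta)}\Bigr),
\]
which is the asserted conditional probability.

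The only point requiring a moment of care is the legitimacy of interpreting the ratio as $\P_0(D^-_{H_\alpha}\leq\delta\mid D^-_{H_\beta}=\rho)$. For this I would observe that the marginal density in (\ref{e520x}) is strictly positive on $(0,\beta)$ (the integrand is strictly positive and $S$ is strictly increasing), so the version of the conditional probability obtained as the ratio of densities is well defined on the full support of $D^-_{H_\beta}$ and provides a regular version.

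I do not expect any serious obstacle here, since the hard work has already been accomplished in the derivations of (\ref{e520x}) and (\ref{e5203}); the only genuine check is the algebraic cancellation of the two exponentials and the shared integrand, which is immediate. One could alternatively appeal to the Markov property of $(D^-_{H_\eta})_{\eta\geq 0}$ recorded in (\ref{ke1})--(\ref{ke5}) to justify that this ratio does give the conditional law, reinforcing the structural parallel with the formulas (\ref{je5}) and (\ref{e520}) established earlier for $(M_{\theta_\delta})_{\delta\geq 0}$.
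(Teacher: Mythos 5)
Your proposal is correct and follows essentially the same route as the paper: the paper obtains the conditional law precisely by dividing the joint density in (\ref{e5203}) by the marginal density of $D^-_{H_\beta}$ read off from (\ref{e520x}), with the common prefactor cancelling to leave the single exponential. Your additional remarks on the $\delta<\rho$ case and on the ratio defining a regular version of the conditional probability are harmless elaborations of what the paper leaves implicit.
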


Finally, as indicated above the semigroup of  $(D^-_{H_\al})_{\al\geq 0}$ is given in (4.1) in \cite{salminenvallois20},  it is possible, therefore, to work also vice versa, that is, to deduce the semigroup of  $(M_{\theta_\de})_{\de\geq 0}$ from the semigroup of  $(D^-_{H_\al})_{\al\geq 0}$. Notice, moreover, that the Markov property  of $(M_{\theta_\de})_{\de\geq 0}$ follows from the strong Markov property of $X$, as shown in the proof of Proposition \ref{j1}.

\vskip1cm
\noindent
{\bf Acknowledgement}. We wish to thank Patrick Fitzsimmons for pointing out the resemblance between the distribution of $M_\theta$ derived by Lehoczky in \cite{lehoczky77} and the distribution of $D_{H_\eta}$ derived in our paper \cite{salminenvallois20}. This observation triggered the research reported in  the present paper.

\bibliographystyle{plain}
%\bibliography{rkrange}
\bibliography{yor1}
\end{document}